\newcommand{\myBrianconSkoda}{{Brian\c{c}on-Skoda}\xspace}
\newcommand{\DuBois}[1]{{\underline \Omega {}^0_{#1}}}
\newcommand{\field}[1]{\mathbb{#1}}
\newcommand{\Z}{\field{Z}}
\newcommand{\bZ}{\field{Z}}
\newcommand{\bP}{\field{P}}
\newcommand{\bC}{\field{C}}
\newcommand{\C}{\field{C}}
\newcommand{\bL}{\mathbb{L}}
\newcommand{\fm}{\mathfrak{m}}
\newcommand{\ideal}[1]{\mathfrak{#1}}
\newcommand{\m}{\ideal{m}}
\newcommand{\ffunc}[1]{\mathrm{#1}}
\newcommand{\func}[1]{\mathrm{#1} \,}
\newcommand{\Spec}{\func{Spec}}
\newcommand{\Ext}{\ffunc{Ext}}
\newcommand{\cf}{{\itshape{cf.} }}
\DeclareMathOperator{\Supp}{Supp}
\newcommand{\arrow}[1]{\stackrel{#1}{\rightarrow}}
\DeclareMathOperator{\myR}{{\mathbb R}}
\DeclareMathOperator{\myL}{{\mathbb L}}
\DeclareMathOperator{\reduced}{{red}}
\DeclareMathOperator{\sh}{{sh}}
\DeclareMathOperator{\perfd}{{perfd}}
\DeclareMathOperator{\Hom}{Hom}
\newcommand{\sHom}{{\mathcal{H}\mathrm{om}}}
\newcommand{\be}{\begin{enumerate}}
\newcommand{\ee}{\end{enumerate}}
\newcommand{\li}
 {\leftfootline}
\newcommand{\cO}{\mathcal{O}}
\renewcommand{\phi}{\varphi}
\newcommand{\Cech}{{\v{C}ech} }
\newcommand{\DBSplinter}{{reduced blowup-square splinter}\xspace}
\newcommand{\DBSplinters}{{reduced blowup-square splinters}\xspace}
\DeclareMathOperator{\KH}{KH}
\DeclareMathOperator{\Hir}{Hir}
\DeclareMathOperator{\Bir}{Bir}
\DeclareMathOperator{\Sym}{Sym}
\newcommand{\mydot}{{{\,\begin{picture}(1,1)(-1,-2)\circle*{2}\end{picture}\,}}}
\newcommand{\cl}{{\mathrm{cl}}}
\let\int\relax
\DeclareMathOperator{\int}{i}
\DeclareMathOperator{\coh}{coh}
\DeclareMathOperator{\epf}{epf}
\DeclareMathOperator{\ep}{ep}
\author{Linquan Ma}
\address{Department of Mathematics, Purdue University, West Lafayette, IN 47907, USA}
\email{ma326@purdue.edu}
\author{Peter M. McDonald}
\address{Department of Mathematics\\ Simon Fraser University \\ Burnaby, BC V5A 1S6}
\email{pma94@sfu.ca}
\author{Rebecca R.G.}
\address{Department of Mathematical Sciences \\ George Mason University \\ Fairfax, VA  22030}
\email{rrebhuhn@gmu.edu}
\author{Karl Schwede}
\address{Department of Mathematics \\ University of Utah \\ Salt Lake City, UT 84112}
\email{schwede@math.utah.edu}
\title{
The Brian\c{c}on-Skoda theorem for pseudo-rational and Du~Bois singularities and uniformity in excellent rings
}
\date{\today}
\begin{document}

\begin{abstract}
    Suppose $J = (f_1, \dots, f_n)$ is an $n$-generated ideal in any ring $R$.  We prove a general Brian\c{c}on-Skoda-type containment relating the integral closure $\overline{J^{n+k-1}}$ with ordinary powers $J^k$. 
    We prove that our result implies the full Brian\c{c}on-Skoda containment $\overline{J^{n+k-1}} \subseteq J^k$ for pseudo-rational singularities (for instance regular rings), and even for the weaker condition of birational derived splinters.  Our methods also yield the containment $\overline{J^{n+k}} \subseteq J^k$ for Du Bois singularities and even for a characteristic-free generalization.  Our Brian\c{c}on-Skoda-type theorem also implies well-known closure-based Brian\c{c}on-Skoda results $\overline{J^{n+k-1}} \subseteq (J^k)^{\mathrm{cl}}$ where, for instance, $\mathrm{cl}$ is tight or plus closure in characteristic $p > 0$, or $\mathrm{ep}$ closure or extension and contraction from $\widehat{R^+}$ in mixed characteristic.  Our proof relies on a study of the tensor product of the derived image of the structure sheaf of a partially normalized blowup of $J$ with the Buchsbaum-Eisenbud complex (equivalently the Eagon-Northcott complex) associated to $(f_1,\dots,f_n)^k$. 
    
As an application of our results and methods above, we prove the uniform Artin-Rees theorem and the uniform Brian\c{c}on-Skoda theorem for quasi-excellent, respectively quasi-excellent reduced, rings of finite dimension, answering conjectures of Huneke.   
\end{abstract}

\maketitle


\section{Introduction}

Suppose $R$ is a ring and $J = (f_1, \dots, f_n)$ is an ideal.
In \cite{BrianconSkoda} analytic methods were used to show the famous \myBrianconSkoda theorem, namely that 
\[
\overline{J^{n+k-1}} \subseteq J^k
\]
for all $k\in\mathbb{N}$ if $R$ is the coordinate ring of a smooth variety over $\mathbb{C}$ and 
where $\overline{(-)}$ denotes integral closure.  This was generalized to all regular rings in \cite{LipmanSathayeJacobianIdealsAndATheoremOfBS}.  Alternate proofs and improvements in various cases can be found for instance in  \cite{AberbachHuneke.ImprovedBS,Lipman.AdjointsInRegular,AberbachHunekeTrung.ReductionNumbersBSDepth,HyryVillamayor.ABSTheorem,EinLazarsfeld.AGeometricEffectiveNullstellensatz,Schoutens.NonstandardBS,Andersson.ExplicitVersionsOfBS, Sznajdman.ElementaryBSProof}.  There have been many such results for singular rings as well.  {For many Noetherian rings\footnote{This includes rings essentially of finite type over a field,  DVR, or $\bZ$, and $F$-finite rings of characteristic $p>0$.}, Huneke proved that there exists some integer $m$ such that $\overline{J^{m+k}} \subseteq J^k$ for \emph{all} ideals $J\subseteq R$, this is the so-called \emph{uniform \myBrianconSkoda} theorem \cite{Huneke.UniformBoundsInNoetherianRings} (\cf \cite{Huneke.DesingsAndUniformArtinRees,AnderssonSamuelssonSznajdman.OnBSForSingular,Sznajdman.BSForNonReduced,CidRuizJeffries-UniformityInNonReducedRings,KatzPolstra.UniformBrianconSkodaNonReduced}).  He then used this to prove a \emph{uniform Artin-Rees} theorem for these rings,
namely for each containment of finitely generated $R$-modules $N \subseteq M$, 
there exists an integer $\ell$ such that for \emph{all} ideals $I\subseteq R$, we have $I^kM \cap N \subseteq I^{k-\ell} M$ for all $k\geq \ell$ (\cf \cite{EisenbudHochster.NullstellensatzWithNilpotents,OCarroll.AUniformArtinRees,DuncanOCarroll.AFullUniformArtinRees,Wang.UniformPropertiesOf2Dimensional}). We will prove such a uniform Artin-Rees and uniform Brian\c{c}on-Skoda theorem for all excellent (resp. excellent reduced) rings of finite dimension, resolving Huneke's conjectures \cite[Conjecture 1.3 and Conjecture 1.4]{Huneke.UniformBoundsInNoetherianRings}.}  

On the more precise side, Lipman-Teissier \cite{LipmanTeissierPseudoRational} proved that $\overline{J^{\dim R+k-1}} \subseteq J^k$ if the ambient ring $R$ has pseudo-rational singularities (a characteristic free version of rational singularities); compared with the nonsingular case, one notes that $n$ was replaced\footnote{By enlarging the residue field, we can assume that $J$ is generated up to integral closure by at most $\dim R$ elements.} by $\dim R$, so this statement is not quite optimal. 
Inspired by \cite{LipmanTeissierPseudoRational}, Aberbach-Huneke \cite{AberbachHuneke.FRationalAndIntegralClosure} proved the full version of the \myBrianconSkoda theorem, namely that $\overline{J^{n+k-1}} \subseteq J^k$, for excellent $F$-rational rings (a characteristic $p > 0$ analog of rational singularities which are pseudo-rational by \cite{SmithFRatImpliesRat}).  This implies that $\overline{J^{n+k-1}} \subseteq J^k$ for rational singularities in characteristic zero via reduction mod $p \gg 0$ (\cite{HaraRatImpliesFRat,MehtaSrinivasRatImpliesFRat}), see also \cite{Li.BrianconSkodaAnalytic}.  However, it was unclear whether the full bound holds for pseudo-rational rings in positive or mixed characteristic.  { Below, in \hyperlink{CorollaryA}{{Corollary A}}, we obtain the full version of the \myBrianconSkoda theorem for generalizations of pseudo-rational singularities.}
For sufficiently singular rings, it is not always the case that $\overline{J^{n+k-1}} \subseteq J^k$.
However, Hochster-Huneke showed in \cite{HHmain} that 
\[
\overline{J^{n+k-1}} \subseteq (J^k)^*
\]
assuming $R$ is a Noetherian ring of characteristic $p > 0$ and where $(-)^*$ denotes the tight closure, a way to slightly enlarge the ideal -- a trivial operation if the ring has sufficiently mild singularities.  Related results can then be deduced in equal characteristic zero via reduction mod $p \gg 0$, see \cite{HochsterHunekeTightClosureInEqualCharactersticZero,Schoutens.NonStandardTightClosureForAffineCAlgebras,aschenbrennerschoutens,BrennerHowToRescueSolidClosure}.  Back in characteristic $p > 0$, the tight closure \myBrianconSkoda theorem was improved to the plus closure version in \cite{HochsterHunekeApplicationsofBigCM}.  Analogous closure related mixed characteristic results can be found in \cite{Heitmann.ThePlusClosureInMixed,heitmannepf,HeitmannMaExtendedPlusClosure,MSTWWAdjoint,RodriguezVillalobosSchwede.BrianconSkodaProperty} the last of which utilizes \cite{BhattAbsoluteIntegralClosure}.

Our first main theorem, {which has a remarkably simple proof}, implies, strengthens, and unifies many of the results above.

\hypertarget{MainTheorem}{
\begin{mainthm*}[\autoref{thm.DerivedMain}]
Suppose $R$ is a ring 
and $J = (f_1, \dots, f_n)$ is an ideal.  Set $Y \to \Spec R$ to be the blowup of $\overline{J^{n+k-1}}$ (or any map that dominates it, such as the normalized blowup of $J$ in $\Spec R$ if $R$ is reduced and has finitely many minimal primes\footnote{Note that normalizations are frequently defined only for schemes that locally have finitely many minimal primes.}).  Then $\overline{J^{n+k-1}}$ maps to zero in 
\[
 H_0\big( L^k(\underline{f}) \otimes^{\myL} \myR \Gamma(Y, \cO_{Y})\big)
\]
where $L^k(\underline{f})$ is the Buchsbaum-Eisenbud $L$-complex 
associated to $(f_1, \dots, f_n)^k$.  
\vskip 1pt
In fact, setting $J \cO_Y = \cO_Y(-E)$, we even have that the canonical map 
\[
    \myR\Gamma(Y, \cO_Y(-(n+k-1)E)) \to L^k(\underline{f}) \otimes^{\myL} \myR\Gamma(Y, \cO_Y)
\]
is zero in the derived category.
\end{mainthm*}}

The point is that if $r \in J^k$, or equivalently if $r \in R$ maps to zero in $R/J^k$, then $r$ also maps to zero in $H_0( L^k(\underline{f}) \otimes^{\myL} \myR \Gamma(Y, \cO_{Y}))$ as we discuss below.  But it turns out that mapping to zero in $H_0( L^k(\underline{f}) \otimes^{\myL} \myR \Gamma(Y, \cO_{Y}))$ is quite close to, and sometimes equivalent to, actually being in $J^k$.
{An interesting aspect of the proof of this result is that it does not use any vanishing theorems or Cohen-Macaulayness, unlike many proofs of \myBrianconSkoda-type results.} Let us briefly explain the complex $L^k(\underline{f})$.
If $f_1, \dots, f_n$ is a regular sequence on $R$, then $L^k(\underline{f})$ resolves $R/J^k$, that is, it is isomorphic to $\bZ[x_1, \dots, x_n]/(x_1, \dots, x_n)^k \otimes^{\myL}_{\bZ[x_1, \dots, x_n]} R$ in the derived category where $x_i \mapsto f_i$. In general, $L^k(\underline{f})$ is isomorphic to a specialization of the Eagon-Northcott complex.   {In particular, for $k = 1$, it is simply the Koszul complex.}  See \autoref{subsec.Buchsbaum-Eisenbud} and \cite{EagonNorthcott.IdealsDefinedByMatricesAndCertainComplex,BuchsbaumEisenbudFreeRes,Srinivasan.Thesis,SrinivasanDGA} for more details about this complex.  

A Noetherian reduced ring $R$ is said to be a \emph{birational derived splinter} if $R \to \myR \Gamma(Y, \cO_Y)$ splits for all proper birational maps $\pi : Y \to \Spec R$.  That is, if there exists $\phi : \myR \Gamma(Y, \cO_Y) \to R$ such that 
$
    R \to \myR \Gamma(Y, \cO_Y) \xrightarrow{\phi} R
$
is an isomorphism, see \cite{Lyu.PropertiesBirationalDerivedSplinters,BhattDerivedDirectSummandCharP,KovacsRat,DeDeynLankManali-RahulVenkatesh.MeasuingBirationalDerivedSplinters}.  We observe below in \autoref{lem.BirationalSplinterImpliesBirationallyPure} that every pseudo-rational local ring is a birational derived splinter via an argument of Kov\'acs (the converse holds in the Cohen-Macaulay analytically unramified case).  If $R$ is a birational derived splinter, we see that  
\[
    \ker\Big(R \to H_0(L^k({\underline f}) \otimes^{\myL} \myR \Gamma(Y, \cO_Y))\Big) \subseteq \ker\Big(R \to H_0(L^k({\underline f}) \otimes^{\myL} R)\Big) = J^k.
\]
As a consequence we obtain the following result.

\hypertarget{CorollaryA}{
\begin{corollaryA*}[{\autoref{thm.StrongLipmanTeissier}}]
    Suppose $R$ is a birational derived splinter (e.g., $R$ is  pseudo-rational, or $R$ is excellent and one of $F$-rational, BCM-rational or $+$-rational, see \autoref{rmk. FratBCMratPlusrat}), then 
    \[
        \overline{J^{n+k-1}} \subseteq J^k
    \]
    for any $n$-generated ideal $J$.  The result also holds for any ideal which locally has analytic spread $\leq n$.
\end{corollaryA*}
}
This recovers the main result of \cite{AberbachHuneke.FRationalAndIntegralClosure} 
and proves its analog in mixed characteristic. It provides simple proofs of the main results in \cite{LipmanSathayeJacobianIdealsAndATheoremOfBS,LipmanTeissierPseudoRational} and generalizes them to pseudo-rational singularities and beyond.   

More generally, suppose $R$ is a Noetherian integral domain and $Y \to \Spec R$ is an alteration with $Y$ integral and nonsingular.  After picking a map $\psi : \myR\Gamma(Y, \cO_Y) \to R$ such that $R \to \myR\Gamma(Y, \cO_Y) \xrightarrow{\psi} R$ is multiplication by $c \neq 0$ (which always exists), our method combined with the fact that $Y$ is a global birational derived splinter shows that $c \overline{J^{n+k-1}} \subseteq J^k$ independent of $J$. 
Having such a uniform $c$ was exactly the missing piece needed to prove the uniform Artin-Rees theorem for excellent rings of finite dimension or to prove the uniform \myBrianconSkoda theorem for excellent reduced rings of finite dimension.  While we do not know the existence of regular alterations in this level of generality, Gabber's weak local uniformization (\cite[Expos\'e VII, Theorem
1.1]{GabberWeakAlteration}) provides a local variant of regular alterations. Utilizing Gabber's result together with an intricate construction of nonsingular diagrams of schemes and derived splitting techniques, we obtain a $c\neq 0$ such that $c \overline{J^{n+k-1}} \subseteq J^k$ independent of $J$, see \autoref{thm.UniformBS}.  Then, combining Huneke's work \cite{Huneke.UniformBoundsInNoetherianRings} with \cite{ZhouUniformAnnihilators,LyuFormalLifting}, we obtain the following.
\begin{mainapp*}[{\autoref{cor.UniformAR}, \autoref{cor.UniformBS}, \cf \cite[Conjectures 1.3 and 1.4]{Huneke.UniformBoundsInNoetherianRings}}]
    Suppose $R$ is a quasi-excellent Noetherian ring of finite dimension and $N \subseteq M$ are finitely generated $R$-modules.  Then there exists an integer $\ell$ depending on $N$ and $M$ such that for all ideals $I\subseteq R$ and all $n \geq \ell$, we have that 
    \[
        I^n M \cap N \subseteq I^{n-\ell} M.
    \]
    If, in addition, $R$ is reduced, then there exists an integer $k$ depending only on $R$ so that for all ideals $I\subseteq R$ and all $n \geq k$, we have 
    $$\overline{I^n}\subseteq I^{n-k}.$$
\end{mainapp*}


Returning to our more precise \myBrianconSkoda theorem, a slight variation of our method also yields the following result.

\begin{corollaryB*}[\autoref{cor.DBSplintersHaveSkoda}]
If $R$ is Du Bois, or lim-perfectoid-pure (e.g., $F$-pure), or Cohen-Macaulay and lim-perfectoid-injective (e.g., Cohen-Macaulay and $F$-injective), then
\[
        \overline{J^{n+k}} \subseteq J^k
    \]
    for any $n$-generated ideal $J$.  The result also holds for any ideal which locally has analytic spread $\leq n$.
\end{corollaryB*}
In fact, our \autoref{cor.DBSplintersHaveSkoda} works for any {\it \DBSplinter}, a characteristic-free weakening of Du Bois singularities, so it generalizes \cite{HunekeWatanabe.UpperBoundOfMultiplicityOfFPure,WheelerZhang.RemarksOnBS} and extends them to mixed characteristic. In a related direction, we also obtain in \autoref{cor.PerfectoidBrianconSkoda} that 
$J_{\perfd}\overline{J^{n+k-1}} \subseteq J^k$ for an $n$-generated ideal $J$ in any perfectoid (so not Noetherian) ring.

Finally, our result also implies and unifies several ideal-closure versions of the \myBrianconSkoda theorem.  
Via the canonical map ${L^k(\underline{f}) \to R/J^{k}}$, we have
$
    \overline{J^{n+k-1}} \subseteq \ker \Big(R \to H_0( R/J^k \otimes^{\myL} \myR \Gamma(Y, \cO_Y)) \Big).$
If $R$ is Noetherian and reduced, consider the union over all proper birational maps $Y \to \Spec R$
\[
    (J^k)^{\Bir} := \bigcup_{Y \to \Spec R}\ker \Big(R \to H_0( R/J^k \otimes^{\myL} \myR \Gamma(Y, \cO_Y)) \Big)
\]
which we call the \emph{birational pre-closure} of $J^k$.  We view it as an analog of the $+$-closure of an ideal but in the birational direction.  
In positive (respectively mixed) characteristic, $R \to \myR\Gamma(Y, \cO_Y)$ factors the map $R \to R^+$ (respectively $R \to \widehat{R^+}$), see \cite{BhattDerivedDirectSummandCharP,BhattAbsoluteIntegralClosure,HochsterHuneke.InfiniteIntegralExtensions,HunekeLyubeznikAbsoluteIntegralExtensions}.  Thus we immediately obtain the following.
\begin{corollaryC*}[{\autoref{prop.CharPContainment}, \autoref{prop.MixedCharComparisonToClosure}}]
    Suppose $R$ is a Noetherian reduced ring with $p$ in its Jacobson radical and $I$ and $J = (f_1, \dots, f_n)$ are ideals. Then $I^{\Bir} \subseteq I \widehat{R^+} \cap R$ and so $\overline{J^{n+k-1}} \subseteq (J^k \widehat{R^+}) \cap R$, recovering \cite[Theorem 7.1]{HochsterHunekeApplicationsofBigCM} in characteristic $p$ and \cite[Corollary 5.1]{RodriguezVillalobosSchwede.BrianconSkodaProperty} in mixed characteristic. These then imply that $\overline{J^{n+k-1}} \subseteq (J^k)^*$ of \cite[Theorem 5.4]{HHmain} and that $\overline{J^{n+k-1}} \subseteq (J^k)^{\mathrm{ep}}$ of \cite[Theorem 4.2]{heitmannepf}.
\end{corollaryC*}

\subsection{History of this paper and strategy of the proof}

This paper came out of  \cite{EpsteinMcDonaldRGSchwede} as an attempt to find a \myBrianconSkoda theorem for the Hironaka-closure in characteristic zero.  The starting point was \cite{LipmanTeissierPseudoRational}, where Lipman-Teissier showed that if $h \in \overline{J^n}$ for $J = (f_1, \dots, f_n)$, then the \Cech class $[{h \over f_1 \cdots f_n}]$ is zero in $H^n_J(\myR\Gamma(Y, \cO_Y))$ where $Y$ is the normalized blowup of $J$.  As Koszul homology limits to local cohomology, we began by trying to show the stronger statement that $h \mapsto 0 \in H_0\big(\operatorname{Kos}(\underline{f}, \myR \Gamma(Y, \cO_Y))\big)$; indeed for small values of $n$ this is easy to see quite explicitly, and we were able to prove it in general.  Replacing the Koszul complex with a Buchsbaum-Eisenbud complex (or equivalently an Eagon-Northcott complex) we obtained our main result by a direct computation of the double complex $L^k(\underline{f}) \otimes \myR\Gamma(Y, \cO_Y)$.    
Finally, we discovered our less computational proof of the main theorem, also compare with \cite[Proof of Theorem B]{LazarsfeldLee.LocalSyzygiesOfMultiplierIdeals}. {The proof of the main application came after we realized that our result yielded a short proof that $\mathrm{T}(R) \neq 0$ assuming the existence of regular alterations. } 


\subsection*{Acknowledgements}
This paper grew out of studying the Hironaka closure in \cite{EpsteinMcDonaldRGSchwede} where Neil Epstein is also an author.  We deeply thank Neil Epstein for many valuable conversations on the topic of this paper.  The authors would also like to thank Rahul Ajit, Bhargav Bhatt, Rankeya Datta, Daniel Erman, Federico Galetto, Sean Howe, 
Craig Huneke, Annette Huber-Klawitter, Srikanth Iyengar, S\'andor Kov\'acs, Robert Lazarsfeld,    Zhenqian Li, Shiji Lyu, Wenbo Niu, Thomas Polstra, Claudiu Raicu, Ilya Smirnov, Shunsuke Takagi, and Keller VandeBogert
for valuable discussions related to this project and comments on earlier drafts.  {We particularly thank Keller VandeBogert for explaining the Buchsbaum-Eisenbud complex to us, and especially thank Bhargav Bhatt for several conversations related to hypercovers.}  Finally, we thank the referee for numerous useful comments and suggestions.

The first author received support from NSF grants DMS-2302430 and DMS-2424441.
The second author would like to acknowledge the support of the Pacific Institute for the Mathematical Sciences and a grant from the Simons Foundation International [SFI-MPS-T-Institutes-00020822, OY]. The third author received support from NSF grant DMS-2424326. The fourth author received support from NSF grants DMS-2101800, DMS-2501903 and Simons Foundation Travel Support for Mathematicians SFI-MPS-TSM00013051 while working on this project.  Work on this project was done by all the authors at the BIRS workshop ``Notions of Singularity in Different Characteristics'' in Banff Canada in October 2025.

\section{Integral closure and the Buchsbaum-Eisenbud complex}

In this section we show our main result.  We begin with some preliminaries on \Cech covers.

\subsection{Partially normalized blowups and \Cech covers}\label{subsec.BlowupsAndCech}

We begin by discussing partially normalized blowups of $J \subseteq R$.  For more information, we refer the reader to \cite[\href{https://stacks.math.columbia.edu/tag/052P}{Tag 052P}]{stacks-project} and \cite[\href{https://stacks.math.columbia.edu/tag/01OF}{Tag 01OF}]{stacks-project}.  For those most interested in the geometric case, what follows below becomes easier to contemplate if $R$ is a domain and so we invite the reader to think in that case if they prefer.  Suppose that $J = (f_1, \dots, f_n)$.  Then the blowup of $J$, $Y \to \Spec R$ is covered by affine charts corresponding to each $f_i$, $Y_i := \Spec R[{f_1 \over f_i}, \dots, {f_n \over f_i}]$ where here we view $R_i := R[{f_1 \over f_i}, \dots, {f_n \over f_i}]$ as a subring of $R[{1 \over f_i}]$ (in particular, if $g f_i = 0$, then $g$ becomes zero in this ring).  We remind the reader of the following.
\begin{enumerate}
    \item The blowups of $J$ and $J^m$ agree for any integer $m > 0$.  This follows either via the universal property of blowups or direct computation.  In particular, for the blowup of $J^m$, one only needs the charts $Y_i$ which correspond to $f_i^m \in J^m$.  
    \item Suppose $h \in \overline{J}$ with associated ideal-integral closure relation $h^d + a_1 h^{d-1} + \dots + a_d = 0$ with $a_i \in J^i$, then we see that $h/f_i$ satisfies the ring-integral closure relation 
    \[
        \left({h \over f_i}\right)^d + {a_1 \over f_i} \left({h \over f_i}\right)^{d-1} + \dots + {a_d \over f_i^d} \in R_i.
    \]
    Furthermore, if we blowup $(J, h)$, the chart corresponding to inverting $h$ is redundant. Indeed, in the Rees algebra $R[(J, h)t]$ (with $t$ a dummy variable) we see, by utilizing the original integral equation for $h$, that a homogeneous prime containing $ft$ for all $f \in J$ must contain $ht$.
    Hence we see that the blowup of $(J, h)$ is integral over the blowup of $J$. Taking a limit, the blowup of $\overline{J^{n+k-1}}$  has an integral map to the blowup of $J$.
\end{enumerate}

In what follows, we set $Y \to \Spec R$ to be the blowup of $\overline{J^{n+k-1}}$ with the affine cover whose charts $Y_i$ correspond to $f_i$ as above. Since $Y \to \Spec R$ factors through the blowup of $J$, we have that $J \cO_Y = \cO_Y(-E)$ where $E$ is an effective Cartier divisor ($E$ is not necessarily a Weil divisor in our generality as $Y$ may not be normal).  Observe also that $\overline{J^{n+k-1}} \cO_Y = \cO_Y(-(n+k-1)E)$.

{
}


\subsection{The Buchsbaum-Eisenbud complex}
\label{subsec.Buchsbaum-Eisenbud}
In this subsection, we provide some background on the Buchsbaum-Eisenbud complex, first introduced in \cite{BuchsbaumEisenbudFreeRes}.  It is isomorphic to an appropriate Eagon-Northcott complex \cite{EagonNorthcott.IdealsDefinedByMatricesAndCertainComplex}.  We largely follow the exposition in \cite[Section~1]{SrinivasanDGA}.  

Let $J=(f_1,\dots,f_n)$ be an ideal and let $F\to R$ be a map of free modules where $F$ has basis $e_1,\dots,e_n$ and $e_i\mapsto f_i$. The Buchsbaum-Eisenbud complex for $J^k$ is then the complex
\[0\to L_n^k(F)\to L_{n-1}^k(F)\to\dots\to L_2^k(F)\to L_1^k(F)\to R\to0\]
where $L_i^k(F)$ is the image of the natural map
\[\bigwedge^iF\otimes_RS_{k-1}F\to\bigwedge^{i-1}F\otimes_RS_kF.\]
It easily follows that the $0$th homology of $L^k(\underline{f})$ is $R/J^k$.  Indeed, if $f_1,\dots, f_n$ form a regular sequence, then  $L^k(\underline{f})$ is a free resolution of $R/J^k$.



We now globalize this perspective to the blowup of our ideal. Let $Y\to \Spec R$ be a (partially) normalized blowup of $J=(f_1,\dots,f_n)$ as above and recall that $J\cO_Y=\cO_Y(-E)$.  {It is standard to create an exact Koszul-type complex of locally free sheaves on such a blowup (see for instance \cite[Proof of 9.6.31]{LazarsfeldPositivity2}), we explain the generalization of this to the Buchsbaum-Eisenbud complex or Eagon-Northcott complex.}  Let $\{Y_i\}_{i=1}^n$ be the standard affine charts and consider $L^k(R_j)$, the Buchsbaum-Eisenbud complex associated to the $k$th power of the (unit) ideal $(\frac{f_1}{f_j},\dots,\frac{f_n}{f_j})\subseteq R_j$:
$$    0 \to R_j^{b_n} \xrightarrow{\partial_n} R_j^{b_{n-1}} \xrightarrow{\partial_{n-1}} \cdots \xrightarrow{\partial_3} R_j^{b_2} \xrightarrow{\partial_2} R_j^{b_1}\xrightarrow{\partial_1} R_j \to 0.$$
Note that $\partial_1\colon R_j^{b_1}\to R_j$ has its entries the degree $k$ monomials in $\frac{f_1}{f_j},\dots,\frac{f_n}{f_j}$ (which includes $\frac{f_j}{f_j}=1$) while the remaining differentials are matrices whose entries are $\pm\frac{f_i}{f_j}$. Furthermore, this complex is exact as its homologies are annihilated by the unit ideal. We next consider the following map of complexes: 
\[
\begin{tikzcd}
    0\arrow[r]& R_j^{b_n}\arrow[r,"\partial_n"]\arrow[d,"f_j^{-(n-1)}"]&R_j^{b_{n-1}}\arrow[r]\arrow[d,"f_j^{-(n-2)}"]&\cdots\arrow[r]&R_j^{b_2}\arrow[r,"\partial_2"]\arrow[d,"f_j^{-1}"]&R_j^{b_1}\arrow[r,"\partial_1"]\arrow[d,"1"]& R_j\arrow[d,"f_j^k"] \ar[r] & 0 \\
    0\arrow[r]& f_j^{-(n-1)}R_j^{b_n}\arrow[r,"f_j\partial_n"]&f_j^{-(n-2)}R_j^{b_{n-1}}\arrow[r]&\cdots\arrow[r]&f_j^{-1}R_j^{b_2}\arrow[r,"f_j\partial_2"]&R_j^{b_1}\arrow[r,"f_j^k\partial_1"]& f_j^kR_j \ar[r] & 0
\end{tikzcd}
\]
By design, this map is an isomorphism. Furthermore, the differentials of the bottom complex are simply matrices whose entries are $\pm f_i$ (except for $f_j^k\partial_1$ whose entries are the degree $k$ monomials in $f_1,\dots,f_n$) and thus its differentials are independent of $j$. Therefore we can glue these complexes on the charts $\{Y_i\}_{i=1}^n$ together to obtain the following exact complex:
\[
   0 \to \cO_Y((n-1)E)^{\oplus b_n}\to \cO_Y((n-2)E)^{\oplus b_{n-1}}\to\cdots\to\cO_Y^{\oplus b_1}\to \cO_Y(-kE) \to 0.
\]
Twisting by $\cO_Y(-(n-1)E)$, we obtain an exact complex
\[
\text{BE}_2\colon \hspace{0.5em}   0 \to \cO_Y^{\oplus b_n}\to \cO_Y(-E)^{\oplus b_{n-1}}\to\cdots\to\cO_Y(-(n-1)E)^{\oplus b_1}\to \cO_Y(-(n+k-1)E) \to 0
\]
which can be viewed as a subcomplex of the Buchsbaum-Eisenbud complex for $(f_1,\dots,f_n)^k$ on $Y$:
\[
\text{BE}_1\colon\hspace{0.5em}  0 \to \cO_Y^{\oplus b_n}\to \cO_Y^{\oplus b_{n-1}}\to\cdots\to\cO_Y^{\oplus b_1}\to \cO_Y\to 0.
\]

\begin{remark}
\label{rmk: Eagon-Northcott}
    Note that the Buchsbaum-Eisenbud complex is isomorphic to the so-called Eagon-Northcott complex by \cite[Cor.~3.2]{BuchsbaumEisenbudFreeRes}. We chose to work with the Buchsbaum-Eisenbud complex here, as it gives the obvious generating set for $(f_1,\dots,f_n)^k$ and is generally easier to describe than the Eagon-Northcott complex. Readers familiar with Eagon-Northcott complex can also check that there is a map from an exact Eagon-Northcott complex $\text{EN}_2$ on $Y$ to the pullback $\text{EN}_1$ of an Eagon-Northcott complex on $\Spec R$ to $Y$, where
    \begin{align*}
\text{EN}_2\colon \hspace{1em} & 0\to \bigwedge^{n+k-1}\cO_Y^{\oplus n+k-1} \otimes (\Sym^{n-1}\cO_Y(-E)^{\oplus k})^*\otimes \cO_Y(-(n-1)E) \to \cdots\to \\ 
& \bigwedge^{k+1}\cO_Y^{n+k-1}\otimes (\Sym^1\cO_Y(-E)^{\oplus k})^* \otimes \cO_Y(-(n-1)E) \to 
\bigwedge^{k}\cO_Y^{n+k-1} \otimes \cO_Y(-(n-1)E)  \\
& \to \bigwedge^k\cO_Y(-E)^{\oplus k} \otimes \cO_Y(-(n-1)E) \to 0
\end{align*} 
while
$$
\text{EN}_1\colon \hspace{0.5em} 0\to \bigwedge^{n+k-1}\cO_Y^{\oplus n+k-1} \otimes (\Sym^{n-1}\cO_Y^{\oplus k})^* \to \cdots\to \bigwedge^{k+1}\cO_Y^{n+k-1}\otimes (\Sym^1\cO_Y^{\oplus k})^* \to \bigwedge^{k}\cO_Y^{n+k-1} \to \bigwedge^k\cO_Y^{\oplus k} \to 0.
$$
These two complexes are both constructed from the $k\times(n+k-1)$ matrix:
\[
\begin{bmatrix}
    f_1 & f_2 & \dots & f_n & 0 & \dots  & \dots   & 0\\
    0 & f_1 & f_2 & \dots & f_n & 0 &\dots & 0 \\
    0 & 0 & f_1 & \dots & \dots   & f_n  & \dots, & 0\\
    \vdots & \vdots & \vdots & \ddots & \vdots  & \ddots & \ddots & \vdots \\
    0 & 0 & 0 & \dots  & \dots &  \dots & \dots & f_n
\end{bmatrix}.
\]
\end{remark}

\subsection{The main result}


In this subsection we prove our derived version of the \myBrianconSkoda theorem. The proof is remarkably simple and short.

\begin{theorem}
\label{thm.DerivedMain}
    Suppose $R$ is a ring and  $J = (f_1, \dots, f_n)\subseteq R$ is an ideal.  
    Suppose that $\pi : Y \to \Spec R$ is the blowup of $\overline{J^{n+k-1}}$ (or any map that dominates it) with $J\cO_Y=\cO_Y(-E)$ for $E$ an effective Cartier divisor.  Then the canonical map 
    \[
        \myR\Gamma(Y, \cO_Y(-(n+k-1)E)) \to L^k(\underline{f}) \otimes^{\myL} \myR\Gamma(Y, \cO_Y)
    \]
    is the zero map in the derived category.  In fact, even $\cO_Y(-(n+k-1)E) \to L^k(\underline{f}) \otimes \cO_Y$ is zero. In particular, by taking zeroth cohomology, the map 
    \[
        \overline{J^{n+k-1}} \to  H_0(L^k(\underline{f})\otimes^\bL \myR\Gamma(Y,\cO_Y))
    \]
    is zero.
\end{theorem}
\begin{proof}
By viewing $\cO_Y(-(n+k-1)E)$ as a complex in degree zero, we have a natural map $\cO_Y(-(n+k-1)E) \to \text{BE}_2$
and hence a natural composition map
\[
        \cO_Y(-(n+k-1)E)  \to \text{BE}_2 \to \text{BE}_1.\footnote{Alternatively, we can also consider the composition $\overline{J^{n+k-1}} \cO_Y \to \text{EN}_2 \to \text{EN}_1$, see \autoref{rmk: Eagon-Northcott}.}
    \]
Since $\text{BE}_2$ is exact and hence zero in the derived category, $\cO_Y(-(n+k-1)E) \to \text{BE}_1 = L^k(\underline{f})\otimes \cO_Y$ is zero in the derived category. Now simply apply the derived functor $\myR\Gamma(Y, -)$ and use the projection formula to obtain the conclusion. The final assertion follows by taking the zeroth cohomology and noticing that $\overline{J^{n+k-1}}\subseteq \Gamma(Y, \cO_Y(-(n+k-1)E))$. 
\end{proof}

\begin{remark}
There is another short argument explaining \autoref{thm.DerivedMain}. For any invertible ideal sheaf $\mathcal{I}\subseteq \cO_Y$ that is globally generated by $n$ elements $(f_1,\dots,f_n)$, the $\cO_Y$-module structure on the $\cO_Y$-algebra $\text{Kos}(f_1,\dots,f_n; \cO_Y)$ factors over $\cO_Y/\mathcal{I}^n$: it has $n$ cohomology groups (that sit in cohomological degrees $[-(n-1),0]$) and each of which is annihilated by $\mathcal{I}$ and thus $\mathcal{I}^{n}$ annihilates $\text{Kos}(f_1,\dots,f_n; \cO_Y)$ in $D(Y)$ by \cite[Lemma 3.2]{BhattDerivedDirectSummandCharP}. In general, by comparing with the exact complex $\text{BE}_2$ one sees that $H^{i}(\text{BE}_1)$ is annihilated by $\mathcal{I}$ for $-(n-1)\leq i\leq -1$ while $H^0(\text{BE}_1)$ is annihilated by $\mathcal{I}^k$. Thus $\mathcal{I}^{n+k-1}$ annihilates $\text{BE}_1$ in $D(Y)$ by \cite[Lemma 3.2]{BhattDerivedDirectSummandCharP}. We would like to thank Bhargav Bhatt for pointing this out to us.
\end{remark}

\begin{remark}
    The statement and proof of \autoref{thm.DerivedMain} also works verbatim if $\overline{J^{n+k-1}}$ is replaced, wherever it occurs, by any ideal $I$ such that $J^{n+k-1} \subseteq I \subseteq \overline{J^{n+k-1}}$.
\end{remark}




\section{Generalizations of rational and Du Bois singularities}

Suppose $\pi : Y \to X$ is a map of reduced Noetherian schemes.  We say $\pi$ is \emph{birational} if $\pi$ induces a bijection between the generic points of $X$ and $Y$ and furthermore induces an isomorphism between the residue fields at those generic points, see \cite[\href{https://stacks.math.columbia.edu/tag/01RO}{Tag 01RO}]{stacks-project}.
If $\pi$ is finite type, this is equivalent to the condition that $\pi$ induces a  bijection between the irreducible components of $Y$ and of $X$, and $\pi$ induces an isomorphism on a dense open set on each such irreducible component, see \cite[\href{https://stacks.math.columbia.edu/tag/0BAC}{Tag 0BAC}]{stacks-project}.

 We recall the definition of pseudo-rational rings, noting that regular rings are pseudo-rational \cite[Section 4]{LipmanTeissierPseudoRational}.
\begin{definition}[{\cite[Section 2]{LipmanTeissierPseudoRational}}]
\label{def.PseudoRational}
    Suppose $(R, \m)$ is a $d$-dimensional Noetherian local ring.  We say that $R$ has \emph{pseudo-rational singularities} if the following conditions hold.
    \begin{enumerate}
        \item $R$ is normal.
        \item $R$ is Cohen-Macaulay.
        \item The $\m$-adic completion $\widehat{R}$ is reduced (that is, $R$ is analytically unramified).
        \item For every proper birational map $\pi : Y \to \Spec R$ from a normal and integral $Y$, the induced  
        \[
            H^d_{\m}(R) \to H^d_{\m}(\myR \Gamma(Y, \cO_Y))
        \]
        is injective.  If $R$ has a dualizing module $\omega_R$, this just means that $\Gamma(Y, \omega_Y) \to \omega_R$ is surjective.\label{def.PseudoRational.RealCondition}
    \end{enumerate}
    We say that a Noetherian scheme is \emph{pseudo-rational} if its stalks are pseudo-rational.
\end{definition}

The earlier conditions of the definition imply that there are many normal $Y$ with $\pi : Y \to \Spec R$ proper birational.  In fact, for each proper birational $Y \to \Spec R$ with $Y$ integral, the normalization map $Y^{\mathrm{N}} \to Y$ is finite, see the discussion \cite[Pages 102, 103]{LipmanTeissierPseudoRational}.

Our next goal is to study a weakening of the pseudo-rational hypothesis.  For that we need a definition and some basic facts about pure maps in the derived category.

\begin{definition}[{\cite{Christensen.IdealsPhantomsGhostsSkeleta},\cite[Proposition 2.10]{BMPSTWW3}}]
    A map $M \to N$ in $D(R)$ with cone $Q$ is \emph{pure} if for each perfect complex $P$ with a map $P \to Q$, the composition $P \to M[1]$ is zero.  In other words, this means that $Q \to M[1]$ is phantom in the sense of \cite{Christensen.IdealsPhantomsGhostsSkeleta}, see \cite[Chapter 5]{Krause.HomologicalTheoryOfReps}.
    
    Equivalently, $M \to N$ is pure if $M \to N$ can be written as a filtered colimit of split maps $M \to N_i$ (in the derived $\infty$-category).
\end{definition}

\begin{lemma}[{\cf \cite[Proposition 3.10]{Lyu.PropertiesBirationalDerivedSplinters}}]
\label{lem.SplitVsPure}
    Suppose $R$ is a ring and $R \to S$ is faithfully flat.  Suppose $M \to N$ is a map in $D(R)$ such that the base change $M \otimes_R S=: M_S \to N_S := N \otimes_R S$ is pure.  Then $M \to N$ is also pure.  
\end{lemma}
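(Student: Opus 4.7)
The plan is to verify purity directly from the definition: take any perfect complex $P$ together with a map $P \to Q$ (where $Q := \mathrm{cone}(M \to N)$), and show that the composition $\alpha : P \to Q \to M[1]$ vanishes in $D(R)$. Since $S$ is flat over $R$, the cone of $M_S \to N_S$ is just $Q_S = Q \otimes_R S$, and by purity of $M_S \to N_S$ we already know that the base change $\alpha \otimes_R S : P_S \to Q_S \to M_S[1]$ is zero in $D(S)$. So the whole task reduces to descending this vanishing from $D(S)$ back to $D(R)$.

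For this, I would use that $P$ is perfect, which gives the base change isomorphism
$$\RHom_R(P, M[1]) \otimes_R^{\myL} S \;\simeq\; \RHom_S(P_S, M_S[1]).$$
Since $S$ is flat over $R$, the derived tensor agrees with the ordinary one, and flatness also means taking $H^0$ commutes with $- \otimes_R S$. Taking $H^0$ therefore yields a canonical isomorphism
$$\Hom_{D(R)}(P, M[1]) \otimes_R S \;\cong\; \Hom_{D(S)}(P_S, M_S[1])$$
under which $\alpha \otimes 1$ on the left corresponds to $\alpha \otimes_R S$ on the right; by assumption this element is zero.

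It then suffices to show that for \emph{any} $R$-module $T$, the natural map $T \to T \otimes_R S$ is injective when $S$ is faithfully flat over $R$; applied to $T = \Hom_{D(R)}(P, M[1])$ and the element $\alpha$, this yields $\alpha = 0$. For an element $t \in T$ with $t \otimes 1 = 0$, consider the submodule $Rt \hookrightarrow T$; flatness of $S$ preserves this inclusion after tensoring, so $t \otimes 1 = 0$ already in $Rt \otimes_R S \cong S/\Ann(t)S$, which forces $\Ann(t)\cdot S = S$, and then faithful flatness gives $\Ann(t) = R$, i.e. $t = 0$.

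The only technical point that needs care is the base change isomorphism for $\RHom$, which is standard once $P$ is perfect; everything else is formal manipulation with flatness and faithful flatness. There is essentially no geometric obstacle here, and the argument works verbatim in the stated generality without further assumptions on $M$ or $N$.
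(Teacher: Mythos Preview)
Your proof is correct and follows essentially the same approach as the paper's: both verify the definition of purity by taking a perfect complex $P$ with a map to the cone $Q$, observing that the composition $P \to M[1]$ becomes zero after base change to $S$, and then invoking injectivity of $\Hom_{D(R)}(P,M[1]) \to \Hom_{D(S)}(P_S,M_S[1])$. The only difference is that the paper cites \cite[\href{https://stacks.math.columbia.edu/tag/0A6A}{Tag 0A6A}]{stacks-project} for this injectivity, whereas you unpack it via the base-change isomorphism for $\RHom$ from a perfect complex together with the faithful flatness of $S$.
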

\begin{proof}
    Let $Q$ denote the cone of $M \to N$ and $P \to Q$ is a map from a perfect complex to $Q$.  Our assumption implies that the base change $Q_S \to P_S \to M_S[1]$ is zero.  But $\Hom_{D(R)}(P, Q) \to \Hom_{D(S)}(P_S, Q_S)$ is injective since $P$ is perfect (\cite[\href{https://stacks.math.columbia.edu/tag/0A6A}{Tag 0A6A}]{stacks-project}) and so the result follows.
\end{proof}

   The following result generalizes the fact that a map of modules $f : M \to N$ with $N/f(M)$ finitely presented is pure if and only if it is split injective (\cite[Corollary 5.2]{HochsterRoberts.PurityOfFrobeniusAndLocalCohomology}, 
\cite[\href{https://stacks.math.columbia.edu/tag/058K}{Tag 058K}]{stacks-project}).

\begin{lemma}
\label{lem.LinquanBirationallyPureImpliesBirationalSplinter}
    Suppose $R$ is Noetherian, $M \to N \to C \xrightarrow{+1}$ is a triangle in $D(R)$ with $M\in D^+(R)$ and $C \in D^-_{\mathrm{coh}}(R)$.  Then $M \to N$ is pure if and only if $M \to N$ has a left inverse in $D(R)$.  
\end{lemma}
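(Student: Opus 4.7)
If $\psi\colon N\to M$ is a left inverse of $f\colon M\to N$, then $f\psi$ is an idempotent on $N$, and since $D(R)$ is idempotent complete the given triangle becomes isomorphic to the split triangle $M\to M\oplus C\to C\to 0$; in particular the connecting map $\partial\colon C\to M[1]$ is zero, so the composition $P\to C\to M[1]$ vanishes for every perfect $P$, showing $f$ is pure. This handles the easy direction.

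For the nontrivial direction, assume $f$ is pure; it suffices to show $\partial=0$, for then the triangle splits and yields a left inverse of $f$. Using that $R$ is Noetherian and $C\in D^-_{\mathrm{coh}}(R)$, I would represent $C$ by a bounded above complex $P^\bullet$ of finitely generated free $R$-modules. For each integer $n$ the stupid truncation $\sigma^{\geq -n}P^\bullet$ is a bounded complex of finitely generated projectives, hence perfect, and its inclusion chain map $\iota_n\colon\sigma^{\geq -n}P^\bullet\hookrightarrow P^\bullet$ fits into a short exact sequence of complexes
\[
    0 \to \sigma^{\geq -n} P^\bullet \to P^\bullet \to \sigma^{\leq -n-1}P^\bullet \to 0.
\]
Applying $\Hom_{D(R)}(-,M[1])$ to the associated distinguished triangle, and using purity (which gives $\partial\circ\iota_n=0$ because $\sigma^{\geq -n}P^\bullet$ is perfect), the long exact sequence shows $\partial$ factors as $\tilde\partial_n\circ q_n$ for some $\tilde\partial_n\colon\sigma^{\leq -n-1}P^\bullet\to M[1]$, where $q_n\colon P^\bullet\to\sigma^{\leq -n-1}P^\bullet$ is the canonical quotient.

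To kill this factorization, I would use that $M\in D^+(R)$ to replace $M$ by a bounded below K-injective complex of injectives concentrated in degrees $\geq a$, so that $M[1]$ lives in degrees $\geq a-1$. Then $\Hom_{D(R)}(\sigma^{\leq -n-1}P^\bullet, M[1])$ equals the homotopy classes of honest chain maps from the source (nonzero only in degrees $\leq -n-1$) to the target (nonzero only in degrees $\geq a-1$); for $n$ sufficiently large these degree ranges are disjoint, so every component of such a chain map vanishes and the $\Hom$ is $0$. Hence $\tilde\partial_n=0$, thus $\partial=0$, and the triangle splits.

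The central technical obstacle is the truncation-and-factoring maneuver: verifying that stupid truncations remain perfect, transferring purity into the factorization via the long exact $\Hom$ sequence, and executing the degree bookkeeping cleanly. The Noetherian-coherence hypothesis is used exactly to produce the finite-free resolution of $C$, and the bounded-below hypothesis on $M$ is exactly what lets one cut off the relevant tail below $M[1]$'s support.
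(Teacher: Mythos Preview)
Your argument is correct and follows essentially the same strategy as the paper's: both represent $C$ by a bounded-above complex of finite free modules, use purity on the perfect stupid truncations, and then use $M\in D^+(R)$ to kill the remaining tail. The paper packages the last step via the Milnor exact sequence for $\myR\!\lim\,\myR\Hom(C_i,M)$ (observing that the $\lim^1$ term vanishes because the system $\{\Hom(C_i,M)\}$ is eventually constant), whereas you do it more directly by a single factorization through $\sigma^{\le -n-1}P^\bullet$ and a degree count against a bounded-below injective model of $M[1]$; your route is slightly more elementary but the underlying idea is the same.
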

\begin{proof}
    Obviously if the map is split, then it is pure.  Srikanth Iyengar pointed out that the converse can be deduced from \cite[Proposition 5.2.8]{Krause.HomologicalTheoryOfReps} but we provide a proof for the reader's convenience.  First note that since $M\in D^+(R)$, there exists $n$ so that $\Hom_{D(R)}(K, M)=\Hom_{D(R)}(\tau^{\geq -n}K, M)$ for all $K\in D(R)$. Since $C \in D^-_{\mathrm{coh}}(R)$, 
    we can write $C$ as a colimit of perfect complexes $C_i$ so that $\tau^{\geq -n}C_i\simeq \tau^{\geq -n}C$ for all $i$. To show $M\to N$ is split in $D(R)$, we need to show that $\Ext^1(C, M)=H^1(\myR\lim\myR\Hom(C_i, M))=0$. By the Milnor exact sequence \cite[\href{https://stacks.math.columbia.edu/tag/0CQE}{Tag 0CQE}]{stacks-project}, we have 
    $$0\to {\lim}^1\Hom(C_i, M)\to H^1(\myR\lim\myR\Hom(C_i, M)) \to \lim\Ext^1(C_i, M) \to 0.$$
    The right term above vanishes because $M\to N$ is pure and $C_i$ is perfect, and the left term above also vanishes since $\Hom(C_i, M) = \Hom(\tau^{\geq-n}C_i, M)=\Hom(\tau^{\geq -n}C, M)$ and thus $\{\Hom(C_i, M)\}$ is a constant system.  Thus $\Ext^1(C, M)=0$ as we wanted.
\end{proof}

Lyu coined the following term, also see \cite{KovacsRat,Murayama.VanishingForQSchemes,DeDeynLankManali-RahulVenkatesh.MeasuingBirationalDerivedSplinters}.

\begin{definition}[{\cite{Lyu.PropertiesBirationalDerivedSplinters}}]
    A Noetherian reduced ring $R$ is a \emph{birational derived splinter} if for each proper birational  $\pi : Y \to \Spec R$, we have that $R \to \myR \Gamma(Y, \cO_Y)$ has a left inverse.  More generally, a Noetherian reduced scheme $X$ is a \emph{global birational derived splinter} if for each proper birational  $\pi : Y \to X$, we have that $\cO_X \to \myR \pi_* \cO_Y$ has a left inverse.    
\end{definition}

\begin{remark}
\label{rem.EnoughToConsiderBlowups}
    By Chow's lemma, to show that $R$ is a birational derived splinter, it suffices to consider projective and birational $\pi$, hence simply consider blowups. See  \cite[Chapter 8, Theorem 8.1.24]{Liu.AlgebraicGeometryAndArithmeticCurves}.  
\end{remark}


\begin{remark}
 Birational derived splinters are easily seen to be normal.  Indeed, if $R \to S$ is a finite birational extension, then $R \to S$ is not pure (equivalently split), unless $S = R$ (as the conductor of the map is the image of $\Hom_R(S, R) \to R$).  Hence, restricting birational derived splinters to domains is harmless since if $R$ has minimal primes $Q_1, \dots, Q_n$, then $R \to \prod R/Q_i$ is a finite extension.
\end{remark}

\begin{remark}
\label{rem.HenselizationPreservesBiratDerSplint}
    In what follows, it will often be helpful to replace our local ring $(R,\fm)$ with its strict Henselization $R^{\sh}$ in order to guarantee that $R$ has an infinite residue field. As such, we want to ensure that if $R$ is a birational derived splinter (or a reduced blowup-square splinter, see \autoref{def.BlowupSquareSplinter}) then so is $R^{\sh}$. This follows because proper birational maps to $\Spec R^{\sh}$ can be dominated by the base change of a proper birational map to $\Spec R$ by \cite[Theorem~5.1]{Lyu.PropertiesBirationalDerivedSplinters}. Alternatively, a similar reduction to the infinite residue field case can be obtained with $R[t]_{\fm[t]}$, see for instance \cite[Example~(c)~before~Theorem~2.1]{LipmanTeissierPseudoRational}.
\end{remark}


We believe the following is known to experts, indeed the argument can be found in the proof of \cite[Lemma 2]{KovacsRat}.  In the case that $X$ is regular, it also follows from \cite[Lemma 3.16]{LankVenkatesh.TriangulatedCategoriesOfSingularities}.

\begin{lemma}
\label{lem.BirationalSplinterImpliesBirationallyPure}
    Suppose $(R, \m)$ is a pseudo-rational local ring of dimension $d$, then $R$ is a birational derived splinter.  Even more, if $X$ is a pseudo-rational scheme with a canonical sheaf, then $X$ is a global birational derived splinter.
\end{lemma}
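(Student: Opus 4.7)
The plan is to follow Kov\'acs's argument from \cite[proof of Lemma 2]{KovacsRat}.

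First I would reduce to a complete local setup.  By \autoref{rem.EnoughToConsiderBlowups}, it suffices to treat a projective birational $\pi\colon Y \to \Spec R$.  Combining \autoref{lem.SplitVsPure} applied to the faithfully flat completion $R \to \widehat R$ with \autoref{lem.LinquanBirationallyPureImpliesBirationalSplinter} --- noting $R \in D^+(R)$ and that the cone of $\alpha \colon R \to \myR\Gamma(Y, \cO_Y)$ lies in $D^b_{\coh}(R)$ because $\pi$ is proper --- it is enough to produce a left inverse after completing.  Hence I may assume $R$ is complete local Cohen-Macaulay with dualizing module $\omega_R$, so that $\omega_R^\bullet = \omega_R[d]$ is a dualizing complex concentrated in one cohomological degree.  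Since analytic unramifiedness makes the normalization $\nu\colon Y^N \to Y$ finite, and since any left inverse of $R \to \myR\Gamma(Y^N, \cO_{Y^N})$ yields one of $\alpha$ by pre-composition with $\myR\Gamma(Y, \cO_Y) \to \myR\Gamma(Y^N, \cO_{Y^N})$, I may further assume $Y$ is normal and integral.

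The next step is Grothendieck duality.  Normality of $R$ gives $\pi_*\cO_Y = R$, so $\myH^0(\alpha)$ is an isomorphism and the cone $C$ of $\alpha$ lies in $D^{\geq 1}_{\coh}(R)$.  Under the anti-equivalence $D := \myR\Hom_R(-, \omega_R[d])$ on $D^b_{\coh}(R)$, splitting of $\alpha$ is equivalent to splitting of the dual map
\[
\beta \colon \myR\Gamma(Y, \omega_Y^\bullet) \longrightarrow \omega_R[d],
\]
where $\omega_Y^\bullet = \pi^!\omega_R[d]$.  By \autoref{def.PseudoRational}\eqref{def.PseudoRational.RealCondition}, pseudo-rationality forces $\myH^{-d}(\beta) \colon \pi_*\omega_Y \twoheadrightarrow \omega_R$ to be surjective.

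The heart of the argument --- and the main obstacle --- is promoting this $\myH^{-d}$-surjection to an honest splitting of $\beta$ in $D^b_{\coh}(R)$.  The crucial structural fact is that $\omega_R[d]$ is concentrated in a single cohomological degree, which uses Cohen-Macaulayness of $R$.  Chasing the triangles, the obstruction to splitting $\alpha$ is a class $\delta \in \Ext^1_R(C, R)$; under Grothendieck duality $\delta$ corresponds to a morphism $\omega_R[d-1] \to D(C)$, and the concentration of $\omega_R[d]$ in degree $-d$ together with the $\myH^{-d}$-surjectivity of $\beta$ forces this obstruction to vanish, as in \cite[Lemma 2]{KovacsRat}.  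This is where the Cohen-Macaulay hypothesis is used essentially.

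For the global statement, the Grothendieck-duality argument runs verbatim on $X$: the canonical module of $X$ provides a dualizing complex concentrated in a single degree (Cohen-Macaulayness of $X$ being part of pseudo-rationality), pseudo-rationality of the stalks yields the global surjection $\pi_*\omega_Y \twoheadrightarrow \omega_X$, and the same obstruction calculation produces a global left inverse of $\cO_X \to \myR\pi_*\cO_Y$ in $D^b_{\coh}(\cO_X)$.
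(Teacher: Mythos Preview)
Your plan follows the same route as the paper---reduce to the complete case via \autoref{lem.SplitVsPure} and \autoref{lem.LinquanBirationallyPureImpliesBirationalSplinter}, pass to normal $Y$, then run the Kov\'acs duality argument---so the overall structure is fine.  The gap is in the step you flag as the heart of the matter.  The assertion that ``concentration of $\omega_R[d]$ in a single degree together with $\myH^{-d}$-surjectivity of $\beta$ forces the obstruction to vanish'' is not true as a general principle: for instance over $R=k[x]/(x^2)$ the quotient $R[0]\to k[0]$ is surjective on $\myH^0$ with target concentrated in one degree, yet does not split.  Dually phrased, your obstruction $\omega_R[d-1]\to D(C)$ is (by biduality) literally the same datum as the original class in $\Ext^1_R(C,R)$, so the obstruction framing by itself is circular.

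What the paper actually uses---and what you should add to your plan---is that the surjection $\pi_*\omega_Y\twoheadrightarrow\omega_R$ is a nonzero map between rank-one torsion-free modules over the normal domain $R$, hence an \emph{isomorphism}.  Inverting it produces an explicit section
\[
\omega_R[d]\xrightarrow{\ \sim\ }(\pi_*\omega_Y)[d]=\tau^{\leq -d}\myR\pi_*\omega_Y^\bullet\longrightarrow \myR\pi_*\omega_Y^\bullet,
\]
and composing with $\beta$ gives an endomorphism of $\omega_R[d]$; since source and target are concentrated in degree $-d$, this endomorphism is determined by its $\myH^{-d}$, which is the identity (equivalently, it is a map of S2 modules that is an isomorphism off codimension $\geq 2$).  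Grothendieck-dualizing this splitting yields the left inverse of $\alpha$.  Once you insert this observation, your plan coincides with the paper's proof.
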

\begin{proof}
    Suppose $\pi : Y \to \Spec R$ is a proper birational map with $Y$ normal.  We work with the completion $\widehat{R}$ to gain access to a canonical module.  By \autoref{lem.SplitVsPure} and \autoref{lem.LinquanBirationallyPureImpliesBirationalSplinter}, it suffices to show  
    $
        \widehat{R} \to \widehat{R} \otimes^{\myL} \myR\Gamma(Y, \cO_Y) \cong \myR \Gamma(Y_{\widehat{R}}, \cO_{Y_{\widehat{R}}})
    $    
    is pure where $Y_{\widehat{R}} := Y \times_{\Spec R} \Spec \widehat{R}$.  We will show that $ \widehat{R} \to \myR \Gamma(Y_{\widehat{R}}, \cO_{Y_{\widehat{R}}})$ is split.  
    
    Now, $\widehat{R}$ is Cohen-Macaulay and has a normalized dualizing complex $\omega_{\widehat{R}}[d]$.  Since $H^d_{\m}(R) \to H^d_{\m}(\myR \Gamma(Y, \cO_Y))$ injects, we see that the Matlis dual 
    $
        \Gamma(Y_{\widehat{R}}, \omega_{Y_{\widehat{R}}}) \to \omega_{\widehat{R}}
    $
    surjects and so is an isomorphism (both sides are rank-1 torsion-free).  This yields a composition 
    \[
        \omega_{\widehat{R}}[d] \xrightarrow{\sim} \Gamma(Y_{\widehat{R}}, \omega_{Y_{\widehat{R}}})[d] \to \myR \Gamma(Y_{\widehat{R}}, \omega_{Y_{\widehat{R}}}^{\mydot}) \to \omega_{\widehat{R}}^{\mydot} \cong \omega_{\widehat{R}}[d]
    \]
    that one sees is an isomorphism (it is a map between S2-modules that is an isomorphism outside a set of codimension $\geq 2$).  Applying Grothendieck duality produces a composition
    \[
        \widehat{R} \leftarrow \myR\Gamma(Y_{\widehat{R}}, \cO_{Y_{\widehat{R}}}) \leftarrow \widehat{R}
    \]
    that is also an isomorphism.  This proves that $\widehat{R} \to \myR \Gamma(Y_{\widehat{R}}, \cO_{Y_{\widehat{R}}})$ is split, as desired.  
    
    For the scheme case, since one may assume $X$ is integral, we may assume that $\omega_X^{\mydot} = \omega_X[0]$ is a dualizing complex.  Set $\omega_Y^{\mydot} := \pi^! \omega_X$.  As above, we have a composition
    \[
        \omega_X^{\mydot} \xrightarrow{\sim} \pi_* \omega_{Y} \to \myR\pi_* \omega_Y^{\mydot} \to \omega_{X}^{\mydot}
    \]
    which is the identity.  Then dualize and argue as above.
\end{proof}

\begin{remark}
\label{rmk. FratBCMratPlusrat}
    Excellent local rings that are $F$-rational or BCM-rational (or even $+$-rational: i.e., $(R,\m,k)$ is a Cohen-Macaulay local domain with $\mathrm{char}\,k > 0$, such that $H^{\dim R}_{\m}(R) \to H^{\dim R}_{\m}(S)$ injects for all finite $R \subseteq S$) are birational derived splinters, as all are pseudo-rational \cite{SmithFRatImpliesRat,maschwede:2021}.  
    Regular rings are derived splinters in general \cite{Bhatt.DirectSummandAndDerivedVariant}, but the argument above that they are \emph{birational} derived splinters is much simpler than the fact that finite maps $R \to S$ split in mixed characteristic \cite{Andre.ConjectureDuFacteurDirect}.  
\end{remark}

\begin{remark}
    We note that excellent local Cohen-Macaulay birational derived splinters are pseudo-rational as condition \autoref{def.PseudoRational.RealCondition} of \autoref{def.PseudoRational} is clear from the splinter condition. However, without the Cohen-Macaulay condition, this is not true.

    Suppose $R \subseteq S$ is a pure inclusion of Noetherian domains and $S$ is a birational derived splinter, then it is not difficult to see that $R$ is a birational derived splinter as well, see  \cite[Corollary 3.12]{Lyu.PropertiesBirationalDerivedSplinters}. 
    But now by \cite[Example 8.6]{MaPolstra.FBook} (which utilizes the examples of \cite{Kovacs.NonCMCanonicalSings}), there exists such a split extension $R \subseteq S$ so that $S$ is $F$-rational and hence pseudo-rational but $R$ is not Cohen-Macaulay.  In particular, in this example, $R$ is a birational derived splinter but is not pseudo-rational.  
    
\end{remark}

\subsection{\myBrianconSkoda theorem for birational derived splinters}
Our discussion so far yields the following result.
\begin{theorem}
\label{thm.StrongLipmanTeissier}
    Suppose that a Noetherian ring $R$ is a birational derived splinter (for instance if $R$ is pseudo-rational, or excellent and one of $F$-rational, BCM-rational or $+$-rational).  Suppose that $J \subseteq R$ is an ideal that locally has analytic spread $\leq n$.  Then for every integer $k > 0$
    \[
        \overline{J^{n+k-1}} \subseteq J^k.
    \]
\end{theorem}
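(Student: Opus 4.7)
The plan is to apply the \hyperlink{MainTheorem}{{\bf Main Theorem}} to an $n$-generator reduction of $J$ and use the splitting supplied by the birational derived splinter hypothesis to collapse $\myR\Gamma(Y,\cO_Y)$ to $R$, turning the vanishing in $H_0(L^k(\underline f)\otimes^{\myL}\myR\Gamma(Y,\cO_Y))$ into the desired ideal containment.

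First I reduce to the case where $J$ is literally $n$-generated. Since $\overline{J^{n+k-1}}\subseteq J^k$ can be tested after localization, I may assume $(R,\m)$ is local with $\ell(J)\le n$, and by passing to the faithfully flat extension $R(t) = R[t]_{\m R[t]}$ I may further assume the residue field is infinite. Then $JR(t)$ admits a minimal reduction $J'=(f_1,\dots,f_n)$ satisfying $\overline{(J')^m}=\overline{J^m R(t)}$ for every $m\ge 0$. Since $J^k R(t)\cap R = J^k$ by faithful flatness, the target containment reduces to $\overline{(J')^{n+k-1}}\subseteq (J')^k$ inside $R(t)$.

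Now I apply \autoref{thm.Main} to $J'$ with $Y'\to\Spec R(t)$ the blowup of $\overline{(J')^{n+k-1}}$: every $h\in\overline{(J')^{n+k-1}}$ maps to zero in $H_0\bigl(L^k(\underline f)\otimes^{\myL}\myR\Gamma(Y',\cO_{Y'})\bigr)$. Let $\psi\colon\myR\Gamma(Y',\cO_{Y'})\to R(t)$ split the structure map. Tensoring $\psi$ with $L^k(\underline f)$ and taking $H_0$ produces a retraction
\[
    \psi_{\ast}\colon H_0\bigl(L^k(\underline f)\otimes^{\myL}\myR\Gamma(Y',\cO_{Y'})\bigr) \longrightarrow H_0\bigl(L^k(\underline f)\otimes^{\myL} R(t)\bigr) = R(t)/(J')^k
\]
whose precomposition with the canonical map $R(t)\to H_0\bigl(L^k(\underline f)\otimes^{\myL}\myR\Gamma(Y',\cO_{Y'})\bigr)$ is the natural projection $R(t)\to R(t)/(J')^k$. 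Since $h$ maps to zero in the middle term, commutativity of this triangle forces $h\in (J')^k$, as required.

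The main obstacle I foresee is the technical claim that the birational derived splinter property is inherited by $R_\m$ and by $R(t)$ from $R$: the hypothesis supplies splittings only for proper birational maps over $\Spec R$, while a general blowup of $\Spec R(t)$ need not be the base change of a blowup over $\Spec R$ because ideals of $R(t)$ may involve $t$ nontrivially. I would address this either by invoking preservation results in \cite{Lyu.PropertiesBirationalDerivedSplinters}, or by a direct faithfully flat descent argument combining \autoref{lem.SplitVsPure} and \autoref{lem.LinquanBirationallyPureImpliesBirationalSplinter}. Once this preservation is secured, the rest of the proof is formal bookkeeping on top of \autoref{thm.Main}.
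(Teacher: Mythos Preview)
Your argument is correct and mirrors the paper's: localize, pass to an extension with infinite residue field (the paper uses the strict Henselization $R^{\sh}$ rather than $R(t)$), take an $n$-generated reduction, apply \autoref{thm.Main}, and split back to $R/J'^k$. The obstacle you flag has a simpler resolution than you suggest: since integral closure commutes with $R\to R(t)$ one has $\overline{(J')^{n+k-1}}=\overline{J^{n+k-1}}R(t)$, so the blowup $Y'$ you need is literally the base change $Y\times_{\Spec R}\Spec R(t)$ of the blowup $Y\to\Spec R$ of $\overline{J^{n+k-1}}$, and the splitting $\myR\Gamma(Y,\cO_Y)\to R$ supplied by the hypothesis on $R$ tensors up to a splitting of $R(t)\to\myR\Gamma(Y',\cO_{Y'})$---no general preservation result for the birational derived splinter property is needed (the paper encodes this in the parenthetical ``blowups commute with flat base change'').
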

\begin{proof}
    We may assume that $R$ is a normal local domain.  Let $Y$ denote the blowup of $\overline{J^{n+k-1}}$.  We may also assume that $R$ is strictly Henselian (see \cite[Proposition 1.6.2]{SwansonHuneke.integralclosure} and \autoref{rem.HenselizationPreservesBiratDerSplint}, and note that blowups commute with flat base change) and so assume $R$ has infinite residue field.

    By \cite[Proposition 8.3.7]{SwansonHuneke.integralclosure}, $J$ has a minimal reduction $J' = (f_1, \dots, f_n) \subseteq J$ whose powers have the same integral closures as the powers of $J$.  Hence we may replace $J$ by $J'$ to assume that $J$ is $n$-generated. 
    Now by \autoref{thm.DerivedMain} and using that $R \to \myR\Gamma(Y, \cO_Y)$ splits, we have that $\overline{J^{n+k-1}}$ maps to zero in $H_0(L^k(\underline{f})) \otimes R = R/J^k$, i.e., $\overline{J^{n+k-1}} \subseteq J^k$. 
\end{proof}

\begin{remark}
\label{rem.SplittingByMultByC}
    Suppose that $Y \to \Spec R$ is the blowup of $\overline{J^{n+k-1}}$ and that there exists some map $\phi : \myR\Gamma(Y, \cO_Y) \to R$ such that composition $R \to \myR\Gamma(Y, \cO_Y) \xrightarrow{\phi} R$ is multiplication by $c$ (see \cite[Definition 2.21]{EpsteinMcDonaldRGSchwede} and \cite[Section 2.4]{Lyu.PropertiesBirationalDerivedSplinters}).  The proof above shows that  
    \[
        c \overline{J^{n+k-1}} \subseteq J^k.
    \]
    {For more discussion in this direction, see \autoref{sec.UniformBS}.}
    %
\end{remark}

{
\begin{remark}
\label{rem.LisResultNeedsCM}
Suppose $R$ is a normal domain essentially of finite type over $\C$.  If $R$ has rational singularities, then we know from \cite{AberbachHuneke.FRationalAndIntegralClosure} that the \myBrianconSkoda theorem holds.  In \cite{Li.BrianconSkodaAnalytic}, the author gave another proof of this fact   in the analytic setting, and also showed that $
\overline{\mathcal{J}^{n+k-1}}\cdot\omega_R\subset\mathcal{J}^k\cdot\omega_R$ 
assuming that $\omega_R = \Gamma(X, \omega_X)$ 
for $X \to \Spec R$ a resolution of singularities (roughly, $R$ is pseudo-rational without the Cohen-Macaulay hypothesis).
It was also claimed that one only needed this weakening of rational singularities without the Cohen-Macaulay condition to deduce that $\overline{J^{n+k-1}} \subseteq J^k$ generally.  

We believe this final statement is false in the algebraic category. 
Explicitly, let $E$ denote the elliptic curve defined by $x^3+y^3+z^3 = 0$ in $\bP^2_{\bC}$ and set $X = E \times \bP^1_{\bC}$ Segre embedded in $\bP^5_{\bC}$.  Set $R$ to be the affine cone over $X$.  
It is well-known that $R$ is not Cohen-Macaulay (use the K\"unneth formula to see that $H^1(X, \cO_X)\neq 0$).  
We explain why $\omega_R = \myR\Gamma(Y, \omega_Y)$ where $Y \to \Spec R$ is the blowup of the origin.  By \cite[Theorem 4.1]{Urbinati.DiscrepanciesOfNonQGor} and \cite[Theorem 8.2]{DeFernexHaconSingsOnNormal} we see that $\omega_R \cdot \cO_Y \subseteq \omega_Y$.  Hence we have a sequence of maps
\[
    \omega_R \to \Gamma(Y, \omega_R \cdot \cO_Y ) \to \Gamma(Y, \omega_Y) \to \omega_R.
\]
But these are nonzero maps of rank-1 torsion-free modules and hence they are injective.  Furthermore, these maps are the identity outside of the origin and hence the composition is an isomorphism as $\omega_R$ is reflexive.  It follows that $\Gamma(Y, \omega_Y) \to \omega_R$ is an isomorphism.

Now, it is well-known that $R$ is the Segre product $\C[x,y,z]/(x^3+y^3+z^3) \# \C[u,v]$. Let $J = ((xu)^2, (zu)^2) \subseteq R$ and $J' = ((xu)^2, (xu)(zu),(zu)^2) \subseteq \overline{J}$. Then we have $h := (xu)(yu)^2(zu) \in \overline{J'^2}=\overline{J^2}$  (use $h^3 \in J'^6$ for the first containment). On the other hand, if $h\in J$, then after specializing $u=v=1$, we would have $xy^2z\in (x^2,z^2)S$ where $S=\C[x,y,z]/(x^3+y^3+z^3)$. But since $x, z$ is a regular sequence in $S$, this would imply $y^2\in (x,z)S$ which is a contradiction. Thus $h\notin J$.  We found this example with the help of Macaulay2 \cite{M2}. 

\end{remark}
}










%

\subsection{A characteristic free weakening of Du Bois singularities}

Suppose $R$ is reduced and $Y \to \Spec R$ is the blowup of an ideal $J \subseteq R$.  Consider the reduced blowup square
\[
    \xymatrix
    {F \ar[r]\ar[d] & Y \ar[d]\\
    V(\sqrt{J}) \ar[r] & \Spec R}
\]
with $F$ the reduced preimage of $V(\sqrt{J})$.
We have the following triangle defining $D_J^\mydot$:
\begin{equation}
\label{eq.SplinterObjectForBlowupSquare}
    D_{J}^{\mydot} \to \myR\Gamma(Y, \cO_Y) \oplus R/\sqrt{J} \to \myR\Gamma(F, \cO_F) \xrightarrow{+1}.
\end{equation}


\begin{definition}
\label{def.BlowupSquareSplinter}
    Suppose $R$ is a Noetherian reduced ring.
    We say that $R$ is a \emph{\DBSplinter} if for each ideal $J\subseteq R$ and $D_{J}^{\mydot}$ as above, the induced map 
    $R \to D_{J}^{\mydot}$
    has a left inverse.
\end{definition}

Suppose $R$ is essentially of finite type\footnote{This can be relaxed in view of \cite{MurayamaInjectivityAndCubicalDescentForSchemesStacksSpaces} but we restrict to this setting.} over a field of characteristic zero.  Then, for instance by \cite{DuBoisMain,GNPP}, we have functorial maps $R \to \DuBois{R}, \cO_Y \to \DuBois{Y}, \cO_F \to \DuBois{F}, $ and $R/\sqrt{J} \to \DuBois{R/\sqrt{J}}$ where $\DuBois{}$ is the $0$th-graded piece of the Deligne-Du Bois complex. Hence from the triangle (see \cite[Proposition 3.9]{DuBoisMain})
$
    \DuBois{R} \to \myR\Gamma(Y, \DuBois{Y}) \oplus \DuBois{R/\sqrt{J}} \to \myR\Gamma(Y, \DuBois{F}) \xrightarrow{+1}
$
 we have a factorization 
\begin{equation}
\label{eq.DuBoisFactorization}
    R \to D_J^{\mydot} \to \DuBois{R}.
\end{equation}
This factorization also follows from hyperresolution constructions \cite{GNPP} or from \cite{HuberJorder.DifferentialFormInHTopology}.

\begin{proposition}
\label{prop.ThingsImplyDBSplinter}
The following singularities are \DBSplinters.
\begin{enumerate}
    \item $R$ is essentially of finite type over a field of characteristic zero and is Du Bois. \label{prop.ThingsImplyDBSplinter.a}
    \item $R$ has $p$ in its Jacobson radical and is lim-perfectoid pure (e.g., $R$ is $F$-pure).
    \item $R$ has $p$ in its Jacobson radical and is Cohen-Macaulay and lim-perfectoid injective (e.g., $R$ is Cohen-Macaulay and $F$-injective).
\end{enumerate}
\end{proposition}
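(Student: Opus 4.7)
The plan is to produce, in each case, a left inverse in $D(R)$ to the canonical map $R \to D_J^{\mydot}$ by factoring this map through a suitable auxiliary object attached to the singularities of $R$.

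For the Du Bois case (1), I would appeal directly to the factorization \eqref{eq.DuBoisFactorization}, which yields
\[
    R \to D_J^{\mydot} \to \DuBois{R}.
\]
The hypothesis that $R$ is Du Bois is precisely that $R \to \DuBois{R}$ is a quasi-isomorphism, so inverting it in $D(R)$ produces a left inverse $D_J^{\mydot} \to R$ to $R \to D_J^{\mydot}$, as required.

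For parts (2) and (3), the plan is parallel in character but uses perfectoidization in place of the Du Bois complex. Under the assumption that $p$ is in the Jacobson radical, applying the derived (lim-)perfectoidization functor to the reduced blowup square defining $D_J^{\mydot}$ should yield a compatible factorization
\[
    R \to D_J^{\mydot} \to R_{\perfd}
\]
where the composite is the natural lim-perfectoid structure map of $R$. For part (2), the lim-perfectoid purity hypothesis says that $R \to R_{\perfd}$ is pure in $D(R)$, so the first factor $R \to D_J^{\mydot}$ is also pure. Since $Y \to \Spec R$ and $F \to \Spec R$ are proper and $R$ is Noetherian, both $\myR\Gamma(Y, \cO_Y)$ and $\myR\Gamma(F, \cO_F)$ lie in $D^b_{\mathrm{coh}}(R)$, so $D_J^{\mydot}$ and the cone of $R \to D_J^{\mydot}$ lie in $D^b_{\mathrm{coh}}(R) \subseteq D^-_{\mathrm{coh}}(R)$. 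Applying \autoref{lem.LinquanBirationallyPureImpliesBirationalSplinter} with $M = R \in D^+(R)$ then upgrades purity to the existence of a left inverse.

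For part (3), the Cohen-Macaulay assumption allows me to mimic the duality argument from \autoref{lem.BirationalSplinterImpliesBirationallyPure}. The lim-perfectoid injective hypothesis gives an injection on top local cohomology $H^d_{\m}(R) \hookrightarrow H^d_{\m}(D_J^{\mydot})$; after passing to the $\m$-adic completion to gain access to a dualizing module, Matlis-dualizing this injection produces a surjection of canonicals, which one checks is an isomorphism since it is a map of rank-$1$ torsion-free modules that is the identity away from $\m$. Grothendieck-dualizing then delivers the required splitting $D_J^{\mydot} \to R$ in $D(R)$.

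The main obstacle I expect is the construction of the perfectoid factorization used in parts (2) and (3), which rests on the compatibility of (lim-)perfectoidization with reduced blowup squares: one must verify that applying perfectoidization to each term of the square defining $D_J^{\mydot}$ produces a triangle mapping compatibly to \eqref{eq.SplinterObjectForBlowupSquare} in a way that factors the natural map $R \to D_J^{\mydot}$ through $R_{\perfd}$. Once this compatibility is in hand, the remaining arguments reduce to the purity lemma \autoref{lem.LinquanBirationallyPureImpliesBirationalSplinter} in case (2) and to the duality argument of \autoref{lem.BirationalSplinterImpliesBirationallyPure} in case (3).
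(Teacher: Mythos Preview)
Your arguments for (1) and (2) match the paper's proof exactly: the factorization $R \to D_J^{\mydot} \to \DuBois{R}$ (respectively $R \to D_J^{\mydot} \to R_{\perfd}$, citing \cite[Lemma 5.3]{BMPSTWW3}) together with \autoref{lem.LinquanBirationallyPureImpliesBirationalSplinter} is precisely what the paper does.

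The gap is in (3). After Matlis duality you obtain a surjection
\[
    H^{-d}\big(\mathbf{D}(D_J^{\mydot}\otimes\widehat{R})\big) \twoheadrightarrow \omega_{\widehat{R}},
\]
and you assert this is an isomorphism because it is ``a map of rank-$1$ torsion-free modules that is the identity away from $\m$''. Neither claim is justified. First, $D_J^{\mydot}$ is built from $\myR\Gamma(Y,\cO_Y)\oplus R/\sqrt{J}$ and $\myR\Gamma(F,\cO_F)$, not from a single normal integral $Y$, so there is no immediate reason $H^{-d}(\mathbf{D}(D_J^{\mydot}\otimes\widehat{R}))$ is torsion-free of rank $1$; this is exactly the place where the analogy with \autoref{lem.BirationalSplinterImpliesBirationallyPure} breaks down. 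Second, $R \to D_J^{\mydot}$ is an isomorphism only away from $V(\sqrt{J})$, not away from $\m$, and $V(\sqrt{J})$ can have arbitrary positive dimension.

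The paper handles this with a genuinely new step (the Claim in its proof): it shows the surjection above is also injective by proving $H^d_{\m}(G^{\mydot})=0$ for $G^{\mydot}$ the cone of $R \to D_J^{\mydot}$. This uses two ingredients you have not invoked: weak normality of $R$ (a consequence of lim-perfectoid injectivity, needed to show $H^0(D_J^{\mydot})=R$ and hence $H^0(G^{\mydot})=0$), and a dimension bound $\dim\Supp H^i(G^{\mydot})\le d-i-1$ coming from the fiber dimension of the blowup. These feed into a spectral-sequence argument $H^p_{\m}(H^q(G^{\mydot}))\Rightarrow H^{p+q}_{\m}(G^{\mydot})$ to kill $H^d_{\m}(G^{\mydot})$. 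Only after this Claim does the paper run the duality composition you describe.
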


\begin{proof}
For \autoref{prop.ThingsImplyDBSplinter.a}, recall that $R$ having Du Bois singularities means exactly that $R \to \DuBois{R}$ is an isomorphism.  Thus \autoref{eq.DuBoisFactorization} gives a left inverse of $R\to D_J^\mydot$.

For lim-perfectoid pure singularities we have a factorization 
\[
    R \to D_{J}^{\mydot} \to R_{\perfd},
\]
see for instance \cite[Lemma 5.3]{BMPSTWW3}. Thus by the definition of lim-perfectoid purity, $R\to R_{\perfd}$ is pure and thus $R\to D_{J}^{\mydot}$ is pure in $D(R)$. Hence the latter is split in $D(R)$ by \autoref{lem.LinquanBirationallyPureImpliesBirationalSplinter} (note that in characteristic $p$, lim-perfectoid purity is precisely $F$-purity by \cite[Remark 4.3]{BMPSTWW3}). 

Now we assume $R$ is Cohen-Macaulay and lim-perfectoid injective (by \cite[Remark 4.3]{BMPSTWW3}, in characteristic $p$ this is equivalent to $F$-injective). By considering the factorization $R \to D_{J}^{\mydot} \to R_{\perfd}$ as above and the definition of lim-perfectoid injective, we have that 
$
    H^d_{\m}(R) \to H^d_{\m}(D_{J}^{\mydot})
$
is injective for each maximal ideal $\m$.  After base changing to the $\m$-adic completion $\widehat{R}$, we have that 
$
    H^d_{\m}(\widehat{R}) \to H^d_{\m}(D_{J}^{\mydot} \otimes \widehat{R})
$
is injective.  Hence by local duality, we have a surjection
\[
    H^{-d}({\bf D}(D_{J}^{\mydot} \otimes \widehat{R})) \to \omega_{\widehat{R}}
\]
where ${\bf D}(-) = \myR\Hom_{\widehat{R}}(-, \omega_{\widehat{R}}^{\mydot})$ is Grothendieck duality and $\omega_{\widehat{R}}^{\mydot}$ is a normalized dualizing complex. 
\begin{claim}
The map $H^{-d}({\bf D}(D_{J}^{\mydot} \otimes \widehat{R})) \to \omega_{\widehat{R}}$ is an injection.
\end{claim}
\begin{proof}[Proof of claim]
The strategy is the same as \cite[Proposition 4.10]{KovacsSchwedeSmithLCImpliesDuBois}.  If we consider the triangle 
$
    R \to D_{J}^{\mydot} \to G^{\mydot} \xrightarrow{+1}
$
it suffices to show that $H^d_{\m}(G^{\mydot}) = 0$.  From the spectral sequence $H^p_{\m}(H^q(G^{\mydot})) \Rightarrow H^{p+q}_{\m}(G^{\mydot})$, it suffices to show that $\dim \Supp H^i(G^{\mydot}) \leq d - i - 1$ for every $i$.  

Now, as $R$ is weakly normal (see \cite{DattaMurayama.PermanenceOfFInjectivity,BMPSTWW3}), by taking the zeroth cohomology of the sequence \autoref{eq.SplinterObjectForBlowupSquare}, we see that $H^0(D_{J}^{\mydot}) = R$.  In particular, $H^i(G^{\mydot}) = H^i(D_{J}^{\mydot})$ for $i > 0$ and $H^0(G^{\mydot}) = 0$.    From \autoref{eq.SplinterObjectForBlowupSquare} we see that there is a triangle 
\[
    \myR\Gamma(Y, I_F) \to D_{J}^{\mydot} \to R/\sqrt{J} \xrightarrow{+1}.
\]
It follows that $H^i(D_{J}^{\mydot}) \cong H^i(Y, I_F)$ for $i > 0$.  Finally, $\dim \Supp H^i(Y, I_F) \leq d-i-1$ as $Y \to \Spec R$ has fiber dimension $\leq \dim(R_Q) -1$ at every localization $R_Q$ of $R$.
This proves the claim.
\end{proof}
Finally, we have a composition which is an isomorphism as in the pseudo-rational case:
\[
    \omega_{\widehat{R}}[d] \cong H^{-d}({\bf D} (D_{J}^{\mydot} \otimes \widehat{R}))[d] \to {\bf D} (D_{J}^{\mydot} \otimes \widehat{R}) \to \omega_{\widehat{R}}[d].
\]
 Dualizing proves that $\widehat{R} \to D_{J}^{\mydot} \otimes \widehat{R}$
splits for each maximal ideal $\m$. Using \autoref{lem.SplitVsPure},  \autoref{lem.LinquanBirationallyPureImpliesBirationalSplinter}, and \cite[\href{https://stacks.math.columbia.edu/tag/0D0C}{Tag 0D0C}]{stacks-project}, similar to \cite[2.4, Proposition 3.5]{Lyu.PropertiesBirationalDerivedSplinters}, completes the proof.
\end{proof}

Our next result generalizes \cite[Theorem 3.2(2)]{HunekeWatanabe.UpperBoundOfMultiplicityOfFPure} and several results in \cite{WheelerZhang.RemarksOnBS} and answers some questions therein.


\begin{theorem}
\label{cor.DBSplintersHaveSkoda}
    Suppose $R$ is a reduced ring and $J = (f_1, \dots, f_n) \subseteq R$ is an ideal.
    Let $\pi : Y \to \Spec R$ denote the blowup of $I := \overline{J^{n+k-1}}$ with $J \cO_Y = \cO_Y(-E)$ so that $I \cO_Y = \cO_Y(-(n+k-1)E)$.
    Then the canonical map
    \[
        \myR \Gamma\big(Y, \cO_Y(-(n+k)E)\big) \to L^k(\underline{f}) \otimes^{\myL}_R D_{I}^{\mydot}
    \]
    is zero in the derived category. Hence if $R$ is a Noetherian \DBSplinter (e.g., Du Bois, lim-perfectoid pure, or Cohen-Macaulay and lim-perfectoid injective), then for every integer $k> 0$,
    \[
        \overline{J^{n+k}} \subseteq J^k.
    \]
    The same holds if $J \subseteq R$ is an ideal that locally has analytic spread $\leq n$.  
    
    Furthermore, if $R$ is additionally  normal and Nagata (e.g., excellent), one obtains that 
    \[  
        J_{>n+k-1} \subseteq J^k
    \]
    where the left side is the fractional power integral closure as in \cite[Definition 10.5.3]{SwansonHuneke.integralclosure}.
\end{theorem}
\begin{proof}
    Set $F = E_{\reduced}$ with associated ideal sheaf $I_F$, and note it may not be a divisor since $Y$ need not be normal or even Noetherian.
   Since
    $
        \cO_Y(-(n+k-1)E) \to L^k(\underline{f}) \otimes \cO_Y
    $
    is zero in $D(Y)$ by \autoref{thm.DerivedMain}, so is the map
    $$
        \myR\Gamma(Y,\cO_Y(-(n+k)E)) \to L^k(\underline{f}) \otimes^{\myL}_R \myR\Gamma(Y,\cO_Y(-E)).
    $$
    We have a canonical map $\myR\Gamma(Y,\cO_Y(-E)) \to \myR\Gamma(Y, I_F) \to D_{I}^{\mydot}$ and now we see that the composition 
    \[
        \myR\Gamma(Y,\cO_Y(-(n+k)E)) \to D_{I}^{\mydot} \to L^k(\underline{f}) \otimes^{\myL}_R D_{I}^{\mydot}
    \]
    is zero in the derived category proving the first statement.  For the second statement, use the splitting $D_{I}^{\mydot} \to R$ and take zeroth cohomology.  
    The analytic spread statement follows as in the proof of \autoref{thm.StrongLipmanTeissier} as blowups, and hence $D_I^{\mydot}$, commute with base change to the strict Henselization.

    For the final statement in the normal Nagata case, replace $Y$ by the blowup of $\overline{J^m}$ for $m \gg 0$, which dominates the original $Y$ and is also normal. Note that $F$ is a reduced Weil divisor and we have
    \[
        \cO_Y(-(n+k-1)E - F) \to L^k(\underline{f}) \otimes_R \cO_Y(-F)
    \]
    is zero in $D(Y)$ (note that while $\cO_Y(-F)$ may not be locally free, every other term is). Arguing as above we obtain that the composition 
    $$\myR\Gamma(Y,\cO_Y(-(n+k-1)E - F)) \to D_{\overline{J^m}}^{\mydot} \to L^k(\underline{f}) \otimes^{\myL}_R D_{\overline{J^m}}^{\mydot}$$ is zero in the derived category and hence we see that $\Gamma(Y, \cO_Y(-(n+k-1)E - F))  \subseteq J^k$.  But the left side is $J_{>n+k-1}$ as $R$ and $Y$ are normal.
\end{proof}

The following immediate corollary recovers and generalizes the multiplicity bound of rational, $F$-rational, $F$-pure and Du Bois singularities established in \cite{HunekeWatanabe.UpperBoundOfMultiplicityOfFPure,ShibataMultiplicityDuBois,ParkMultiplicityDuBois}.

\begin{corollary} Let $(R,\m)$ be a Noetherian reduced local ring of dimension $d$ and embedding dimension $e$.
\begin{enumerate}
    \item If $R$ is a birational derived splinter (e.g., pseudo-rational), then $e(R)\leq\binom{e-1}{d-1}$.
    \item If $R$ is a \DBSplinter (e.g., Du Bois, lim-perfectoid pure, or Cohen-Macaulay and lim-perfectoid injective), then $e(R)\leq \binom{e}{d}$.
\end{enumerate}
\end{corollary}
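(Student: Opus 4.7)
The plan is to reduce the multiplicity inequality to a length bound on an Artinian quotient $R/J$, where $J$ is a suitably chosen minimal reduction of $\m$. First, I would pass to a standard faithfully flat extension with infinite residue field, such as $R \to R(x) := R[x]_{\m R[x]}$, which preserves $d$, $e$, $e(R)$, and the birational-derived-splinter and \DBSplinter hypotheses (the latter via the same flat-base-change considerations used in the proof of \autoref{thm.StrongLipmanTeissier}, together with \autoref{lem.SplitVsPure} applied to blowups). With infinite residue field, Noether normalization of the fiber cone $\bigoplus_i \m^i/\m^{i+1}$—a standard graded $k$-algebra of Krull dimension $d$ generated in degree one by $e$ elements—produces a homogeneous system of parameters $\bar a_1,\dots,\bar a_d \in \m/\m^2$. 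Lifting to $a_i \in \m$ yields a minimal reduction $J := (a_1,\dots,a_d)$ of $\m$ whose generators are linearly independent modulo $\m^2$.

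With this $J$ in hand, \autoref{thm.StrongLipmanTeissier} (applied in the BDS case) gives $\overline{J^d}\subseteq J$, while \autoref{cor.DBSplintersHaveSkoda} (in the \DBSplinter case) gives $\overline{J^{d+1}}\subseteq J$.  Using the elementary fact that $\overline{J^m}=\overline{\m^m}\supseteq\m^m$ since $J$ is a reduction of $\m$, I conclude $\m^d\subseteq J$ in case (1) and $\m^{d+1}\subseteq J$ in case (2).  The Artinian local ring $A := R/J$ then has maximal ideal $\bar\m = \m/J$ with $\bar\m^d = 0$ (respectively $\bar\m^{d+1}=0$), and embedding dimension $\dim_k \m/(J+\m^2) = e - d$ by linear independence of the $a_i$ modulo $\m^2$.

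The final step is the classical Hilbert-function estimate: $\dim_k \bar\m^i/\bar\m^{i+1}$ is bounded above by the number of degree-$i$ monomials in $e - d$ variables, so a hockey-stick summation gives
\[
\lambda(R/J) \;\leq\; \sum_{i=0}^{m-1}\binom{e-d+i-1}{e-d-1} \;=\; \binom{e-d+m-1}{e-d},
\]
which specializes to $\binom{e-1}{d-1}$ when $m = d$ (case 1) and to $\binom{e}{d}$ when $m = d+1$ (case 2). Combining with the classical inequality $e(R) = e_J(R) \leq \lambda(R/J)$ valid for any parameter ideal $J$ (an equality precisely when $R$ is Cohen--Macaulay) then yields both bounds; in particular, no Cohen--Macaulay hypothesis on $R$ is required. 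I expect no serious obstacle: the only delicate ingredients are the preservation of the singularity hypotheses under $R \to R(x)$ (flat base change, as already used in the paper) and the existence of a minimal reduction with linearly independent generators modulo $\m^2$ (Noether normalization of the fiber cone after arranging an infinite residue field).
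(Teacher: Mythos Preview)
Your proposal is correct and follows essentially the same route as the paper's proof: reduce to infinite residue field, take a $d$-generated minimal reduction $J$ of $\m$, apply \autoref{thm.StrongLipmanTeissier} and \autoref{cor.DBSplintersHaveSkoda} to obtain $\m^d\subseteq J$ (resp.\ $\m^{d+1}\subseteq J$), and bound $e(R)=e(J)\le\lambda(R/J)$ by counting monomials in the remaining $e-d$ generators. One small remark: your citation of \autoref{lem.SplitVsPure} for the passage to $R(x)$ is in the wrong direction (that lemma is descent of purity, not ascent); what you actually need---and what the proof of \autoref{thm.StrongLipmanTeissier} really uses---is that the relevant blowup over $R(x)$ is the base change of a blowup over $R$ (since $\overline{J^m}=\overline{\m^m}R(x)$), so the required splitting is simply the flat base change of the splitting guaranteed by the hypothesis on $R$.
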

\begin{proof}
The proof is essentially the same as that of \cite[Theorem 3.1]{HunekeWatanabe.UpperBoundOfMultiplicityOfFPure}.
Without loss of generality we may assume $R/\m$ is infinite (see \autoref{rem.HenselizationPreservesBiratDerSplint}). Let $J:=(x_1,\dots, x_d)$ be a minimal reduction of $\m$ and we extend it to a minimal generating set $(x_1,\dots,x_d,x_{d+1},\dots,x_e)$ of $\m$. By \autoref{thm.StrongLipmanTeissier} and \autoref{cor.DBSplintersHaveSkoda}, we know that $\m^d\subseteq J$ in case (a) and $\m^{d+1}\subseteq J$ in case (b). It follows that $R/J$ is generated by monomials in $x_{d+1},\dots,x_e$ of degree $\leq d-1$ in case (a) and $\leq d$ in case (b). In particular, we have 
\[e(R)=e(J)\leq \ell(R/J)\leq \begin{cases}
    \binom{e-1}{d-1} \text{ in case (a),} \\ 
    \binom{e}{d} \text{ in case (b)}.
\end{cases} \qedhere \]
\end{proof}

\subsection{\myBrianconSkoda theorem for perfectoid rings}
Since our \autoref{thm.DerivedMain} does not require $R$ to be Noetherian, it has the following interesting consequence on integral closure of ideals in perfectoid rings. We refer the reader to \cite{BhattScholzepPrismaticCohomology} for unexplained terminologies in what follows.
\begin{corollary}
\label{cor.PerfectoidBrianconSkoda}
Let $R$ be a perfectoid ring and $J=(f_1,\dots,f_n) \subseteq  R$. Then for every $k\geq 1$, we have
$$J_{\perfd}\overline{J^{n+k-1}} \subseteq J^k.$$ Furthermore, for every $k\geq 1$, we have 
$$\overline{J^{n+k}}\subseteq J^k.$$ 
\end{corollary}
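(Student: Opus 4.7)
The plan is to invoke \autoref{thm.Main} directly: since its proof did not require $R$ to be Noetherian, it applies verbatim to a perfectoid ring $R$ with ideal $J=(f_1,\dots,f_n)$. Let $\pi\colon Y\to \Spec R$ denote the blowup of $\overline{J^{n+k-1}}$. Then \autoref{thm.Main} tells us that the canonical map
\[
\overline{J^{n+k-1}}\;\longrightarrow\; H_0\bigl(L^k(\underline{f})\otimes^{\myL}\myR\Gamma(Y,\cO_Y)\bigr)
\]
is the zero map.

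The next step is to pass from $\myR\Gamma(Y,\cO_Y)$ to its derived perfectoidization $\myR\Gamma(Y,\cO_Y)_{\perfd}$ in the sense of Bhatt--Scholze (so that one can land back in $R$, which is already perfectoid). Because the formation of the Buchsbaum-Eisenbud complex commutes with tensor products, we may tensor the vanishing above with the perfectoidization, yielding that $\overline{J^{n+k-1}}$ is killed by the composition
\[
R\;\longrightarrow\;\myR\Gamma(Y,\cO_Y)\;\longrightarrow\;\myR\Gamma(Y,\cO_Y)_{\perfd}\;\longrightarrow\; L^k(\underline{f})\otimes^{\myL}\myR\Gamma(Y,\cO_Y)_{\perfd}.
\]
The key input is then the perfectoid version of the Brian\c{c}on-Skoda / Abhyankar-type statement: for a perfectoid $R$ and the blowup of a finitely generated ideal, the canonical map $R\to \myR\Gamma(Y,\cO_Y)_{\perfd}$ admits a retraction after multiplication by $J_{\perfd}$, i.e.\ every $g\in J_{\perfd}$ lies in the image of the trace-like map from $\myR\Gamma(Y,\cO_Y)_{\perfd}$ back to $R$ composed with $R\to\myR\Gamma(Y,\cO_Y)_{\perfd}$. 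Chasing this splitting through the vanishing above gives $J_{\perfd}\cdot\overline{J^{n+k-1}}\subseteq J^k$, as desired.

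For the ``in particular'' claim, I would deduce $\overline{J^{n+k}}\subseteq J^k$ by reducing to $\overline{J^{n+k}}\subseteq J_{\perfd}\cdot\overline{J^{n+k-1}}$. The point is that if $h\in\overline{J^{n+k}}$, then $h^N\in J^{N(n+k)}$, and perfectoidness of $R$ gives compatible $p^e$-th roots realizing $h$ as the product of an element of $J_{\perfd}$ and an element of $\overline{J^{n+k-1}}$; equivalently, writing the integral equation in the perfectoid tilt and untilting shows that $h$ already lies in $J_{\perfd}\overline{J^{n+k-1}}$. Then the first part finishes the proof.

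The main obstacle is the second step: identifying the correct perfectoidization on the blowup side and verifying that the resulting ``almost splitting'' of $R\to\myR\Gamma(Y,\cO_Y)_{\perfd}$ is controlled exactly by the ideal $J_{\perfd}$, so that the bookkeeping recovers the factor $J_{\perfd}$ in the conclusion rather than a larger (or smaller) ideal. This ultimately rests on the Bhatt-Scholze perfectoidization formalism, in particular the comparison between $R/J$ and its perfectoidization for a finitely generated ideal inside a perfectoid ring.
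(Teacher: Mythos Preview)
Your approach matches the paper's. The precise input resolving your ``main obstacle'' is \cite[Corollary~8.12]{BhattScholzepPrismaticCohomology}: for $R$ perfectoid, the perfectoidization $Y^{\mathrm{pfd}}\to\Spec(R)^{\mathrm{pfd}}=\Spec R$ is a $J$-almost isomorphism, so $R\to\myR\Gamma(Y,\cO_Y)$ is $J$-almost split (via the factorization through $\myR\Gamma(Y^{\mathrm{pfd}},\cO_{Y,\perfd})$); this is the clean form of your ``retraction after multiplication by $J_{\perfd}$'', and it gives at once that the image of $\overline{J^{n+k-1}}$ in $R/J^k$ is $J$-almost zero, i.e.\ $J_{\perfd}\overline{J^{n+k-1}}\subseteq J^k$. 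For the ``in particular'' the paper does not run a tilt/untilt root-extraction argument but simply cites $J\subseteq J_{\perfd}$ from \cite[Theorem~7.4]{BhattScholzepPrismaticCohomology}.
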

\begin{proof}
Let $Y\to \Spec(R)$ be the blowup of $\overline{J^{n+k-1}}$. By \autoref{thm.DerivedMain}, $\overline{J^{n+k-1}}$ maps to zero in 
\[ H_0\big(L^k(\underline{f})\otimes_R^\bL \myR\Gamma(Y,\cO_Y)\big).\]
As $R$ is perfectoid, $Y^{\text{pfd}}\to \Spec(R)^{\text{pfd}}=\Spec(R)$ is a $J$-almost isomorphism (\cite[Corollary 8.12]{BhattScholzepPrismaticCohomology}). Hence $R\to \myR\Gamma(Y,\cO_Y)$ is $J$-almost split (since it factors the map to $\myR\Gamma(Y^{\text{pfd}},\cO_{Y, \text{perfd}})$ which is $J$-almost isomorphic to $R$). Therefore $\overline{J^{n+k-1}}$ has $J$-almost zero image in $H_0(L^k(\underline{f}))=R/J^k$ as wanted. 

For the second statement, note that since perfectoidization is an arc-sheaf, we know that $R$ is a direct summand of $D^\mydot_{\overline{J^{n+k-1}}}$ (see \cite[Lemma 5.3]{BMPSTWW3}). By \autoref{cor.DBSplintersHaveSkoda} (whose first statement does not assume $R$ is Noetherian), 
 \[
        \myR \Gamma\big(Y, \cO_Y(-(n+k)E)\big) \to L^k(\underline{f}) \otimes^{\myL}_R D^\mydot_{\overline{J^{n+k-1}}}
    \]
is zero in $D(R)$. Now use the splitting $D^\mydot_{\overline{J^{n+k-1}}}\to R$ and take zeroth cohomology to obtain that $\overline{J^{n+k}}\subseteq J^k$.
\end{proof}

\section{Connections with closure operations}

Inspired by \cite{EpsteinMcDonaldRGSchwede} we make the following definition.

\begin{definition}
Suppose that $R$ is a reduced Noetherian ring and $J \subseteq R$ is an ideal.  We define the \emph{birational pre-closure} of $J$, denoted $J^{\Bir}$, to be the union 
\[
    \bigcup_Y \ker\Big(R \to H_0\big(R/J \otimes^{\myL} \myR\Gamma(Y, \cO_Y)\big) \Big)
\]
where $Y \to \Spec R$ varies over all proper birational maps.    As the set of proper birational morphisms is filtered, we see that this union is in fact an ideal.  
\end{definition}

\begin{remark}
For general non-Noetherian $R$, we expect that one should replace the birational condition of the birational pre-closure with the $M$-morphisms of \cite{Lyu.PropertiesBirationalDerivedSplinters}. 
\end{remark}

Since $R$ is Noetherian $J^{\Bir}$ is computed by a single $Y$ or, by Chow's lemma, a single blowup.  Furthermore, the formation of $J^{\Bir}$ commutes with localization since blowups can be spread out.
If $R \to S$ is finite birational, we may take our $Y$ factoring through $\Spec S \to \Spec R$.  Hence we obtain:

\begin{lemma}
    Suppose $R$ is a reduced Noetherian ring, and $R \hookrightarrow S$ is a finite extension such that the induced $\Spec S \to \Spec R$ is birational (for example, if $S = \prod_i R/Q_i$ where the $Q_i$ are the minimal primes of $R$).  Then 
    $
        J^{\Bir} = (JS)^{\Bir} \cap R.
    $
\end{lemma}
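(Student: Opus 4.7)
The plan is to make the kernel of $R \to H_0(R/J \otimes^{\myL}_R \myR\Gamma(Y, \cO_Y))$ explicit whenever $Y \to \Spec R$ is proper birational and factors through $\Spec S$, and then pass to a filtered union using the cofinality observation already recorded immediately before the lemma.

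For such a $Y$, write $A := \myR\Gamma(Y, \cO_Y)$ and $\Gamma := H_0(A) = \Gamma(Y, \cO_Y)$. The factorization $Y \to \Spec S \to \Spec R$ makes $\Gamma$ an $S$-module, so $J\Gamma = JS \cdot \Gamma$. Applying $(-) \otimes^{\myL}_R A$ to the short exact sequence $0 \to J \to R \to R/J \to 0$ produces a triangle whose long exact sequence in (co)homology contains
\[
    H_0(J \otimes^{\myL}_R A) \xrightarrow{\alpha} \Gamma \xrightarrow{\beta} H_0(R/J \otimes^{\myL}_R A) \to \cdots .
\]
I claim $\mathrm{image}(\alpha) = J\Gamma$. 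Indeed, $\alpha$ factors through the edge map $H_0(J \otimes^{\myL}_R A) \to J \otimes_R \Gamma$ followed by the multiplication $J \otimes_R \Gamma \to \Gamma$ whose image is $J\Gamma$, yielding $\subseteq$; for the reverse, any $j \in J$ and $\gamma \in \Gamma$ give a cycle $\widetilde{j} \otimes \gamma \in F^0 \otimes A^0$ (for a free resolution $F^\bullet \to J$, using that $d_F$ vanishes in top degree and $d_A(\gamma)=0$) whose class in $H_0(J \otimes^{\myL}_R A)$ is sent by $\alpha$ to $j\gamma$, so $J\Gamma$ lies in the image. Consequently $\beta$ induces an injection $\Gamma/J\Gamma \hookrightarrow H_0(R/J \otimes^{\myL}_R A)$, and the canonical $R \to H_0(R/J \otimes^{\myL}_R A)$ factors as $R \to \Gamma \to \Gamma/J\Gamma \hookrightarrow H_0(R/J \otimes^{\myL}_R A)$, giving
\[
    \ker\bigl(R \to H_0(R/J \otimes^{\myL}_R A)\bigr) = \{\, r \in R : r \cdot 1_\Gamma \in J\Gamma \,\}.
\]
The identical computation over $S$ yields $\ker\bigl(S \to H_0(S/JS \otimes^{\myL}_S A)\bigr) = \{\, s \in S : s \cdot 1_\Gamma \in JS \cdot \Gamma \,\}$, and since $J\Gamma = JS \cdot \Gamma$, intersecting the latter with $R$ recovers the $R$-version kernel. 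Hence for each such $Y$, the $Y$-contribution to $J^{\Bir}$ equals $R$ intersected with the $Y$-contribution to $(JS)^{\Bir}$.

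To finish, every proper birational $Y \to \Spec R$ is dominated by one factoring through $\Spec S$---for instance, $Y \times_{\Spec R} \Spec S$ is finite over $Y$ and factors through $\Spec S$---so such $Y$'s are cofinal in the filtered system computing $J^{\Bir}$, and correspond bijectively with the proper birational maps $Y \to \Spec S$ computing $(JS)^{\Bir}$. Taking unions and using that $R \cap (-)$ commutes with filtered unions of ideals yields $J^{\Bir} = R \cap (JS)^{\Bir} = (JS)^{\Bir} \cap R$. The main care point is the identity $\mathrm{image}(\alpha) = J\Gamma$: the edge-map factorization supplies one containment, and the explicit cycle $\widetilde{j} \otimes \gamma$ supplies the other, ensuring that higher sheaf cohomology of $Y$ does not leak into the kernel.
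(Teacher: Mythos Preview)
Your argument is correct and follows the same cofinality idea the paper invokes (the paper's entire proof is the one-line remark that $Y$'s factoring through $\Spec S$ are cofinal). You actually supply a step the paper leaves implicit: for such a $Y$, the identity
\[
\ker\bigl(R \to H_0(R/J \otimes^{\myL}_R A)\bigr) \;=\; R \cap \ker\bigl(S \to H_0(S/JS \otimes^{\myL}_S A)\bigr),
\]
which is not formal since $R/J \otimes^{\myL}_R A$ and $S/JS \otimes^{\myL}_S A$ need not agree. Your computation that both kernels equal $\{\,r : r\cdot 1_\Gamma \in J\Gamma\,\}$ handles this.

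Two small repairs. First, the ``edge map'' $H_0(J \otimes^{\myL}_R A) \to J \otimes_R \Gamma$ does not exist in the direction you wrote; since $A \in D^{\geq 0}$, the natural map goes the other way. The containment $\mathrm{image}(\alpha) \subseteq J\Gamma$ nonetheless holds for the reason you essentially give: with $F^\bullet \to J$ a free resolution and $A$ modeled in degrees $\geq 0$, one has $Z^0(F^\bullet \otimes A) = F^0 \otimes \Gamma$, and the map to $\Gamma$ is induced by $F^0 \twoheadrightarrow J \hookrightarrow R$, so its image lands in $J\Gamma$. Second, $Y \times_{\Spec R} \Spec S$ need not be birational over $\Spec R$ (it may acquire extra or embedded components); it is cleaner to use that the system of proper birational maps is filtered, so any $Y$ and $\Spec S$ admit a common proper birational refinement, which then factors through $\Spec S$.
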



We note the following immediate consequence of the definition of birational derived splinters.
\begin{lemma}
    If $R$ is a birational derived splinter, then $J^{\Bir} = J$ for every ideal $J \subseteq R$.
\end{lemma}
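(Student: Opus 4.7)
My plan is to prove both inclusions, with the nontrivial one being a direct consequence of the splinter property after base-changing along $R \to R/J$. First I will record that $J \subseteq J^{\Bir}$ always: taking $Y = \Spec R$ (which is a proper birational map to itself), the map $R \to H_0(R/J \otimes^{\myL}_R \myR\Gamma(Y, \cO_Y))$ factors through $R \to R/J$, so its kernel contains $J$.

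For the reverse inclusion, I will fix an arbitrary proper birational $\pi : Y \to \Spec R$ and use the birational derived splinter hypothesis to produce a morphism $\phi : \myR\Gamma(Y, \cO_Y) \to R$ in $D(R)$ such that the composition $R \to \myR\Gamma(Y, \cO_Y) \xrightarrow{\phi} R$ is the identity. Applying the functor $- \otimes^{\myL}_R R/J$ to this composition (and using the identification $R \otimes^{\myL}_R R/J = R/J$) yields morphisms
\[
R/J \longrightarrow R/J \otimes^{\myL}_R \myR\Gamma(Y, \cO_Y) \longrightarrow R/J
\]
whose composition is the identity in $D(R)$. Taking $H_0$ then produces a retraction of the canonical map $R/J \to H_0(R/J \otimes^{\myL}_R \myR\Gamma(Y, \cO_Y))$, so this latter map is split injective as an $R$-module map.

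Combining these facts, the kernel of the composite $R \to R/J \to H_0(R/J \otimes^{\myL}_R \myR\Gamma(Y, \cO_Y))$ is exactly $J$, because the second map is split injective. Since $Y$ was arbitrary, taking the union over all proper birational $Y \to \Spec R$ gives $J^{\Bir} \subseteq J$, and combined with the first step we conclude $J^{\Bir} = J$. I do not expect any serious obstacle here: the only point requiring care is the purely formal compatibility of derived tensor product with composition and with passage to $H_0$, which is standard.
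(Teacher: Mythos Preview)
Your proof is correct and is precisely the argument the paper has in mind: the paper states this lemma as ``an immediate consequence of the definition of birational derived splinters'' without further elaboration, and what you have written is exactly that immediate consequence spelled out. The only comment is that you could shorten the $J \subseteq J^{\Bir}$ step by simply noting that the kernel in the definition always contains $J$ (since the map $R \to H_0(R/J \otimes^{\myL} \myR\Gamma(Y,\cO_Y))$ factors through $R/J$ for every $Y$), but this is cosmetic.
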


Given a resolution of singularities, $J^{\Bir}$ is computed by that resolution.

\begin{lemma}
\label{lem.BirClosureContainment}
    Suppose $\pi : X \to \Spec R$ is a proper birational map with $X$ a global birational derived splinter.    If $J \subseteq R$ is an ideal, then 
    \[
        J^{\Bir} = \ker\Big(R \to H_0(R/J \otimes^{\myL} \myR\Gamma(X, \cO_X)) \Big).
    \]    
\end{lemma}
\begin{proof}
For the main statement, it suffices to consider proper birational maps $Y \xrightarrow{\rho} X \xrightarrow{\pi} \Spec R$, but $\cO_X \to \myR\rho_* \cO_Y$ splits.  
\end{proof}

\begin{remark}
We saw that pseudo-rational schemes with a canonical sheaf are birational derived splinters by 
\autoref{lem.BirationalSplinterImpliesBirationallyPure}, and hence such $X$ can be used in \autoref{lem.BirClosureContainment}.  Furthermore, if $X \to \Spec R$ is a resolution of singularities with $X$ of finite dimension, one can use that $X$ in \autoref{lem.BirClosureContainment}.  In fact, the finite dimensional hypothesis is not needed to prove that regular $X$ is a derived splinter via an argument of Bhatt (\cite[Remark 3.4]{MaSchwede.KunzTypeCharacterization}, \cite[Lemma 3.16]{LankVenkatesh.TriangulatedCategoriesOfSingularities}), we shall repeat Bhatt's argument within the proof of \autoref{thm.UniformBS} below.  Recall resolutions of singularities are known to exist if $R$ is excellent and either $\dim R \leq 3$ \cite{Abhyankar.ResOfSingsOfEmbeddedSurfaces,Lipman.Desingularization,Cutkosky.ResOfSingularitiesBook,CossartPiltant.ResInCharp1,CossartPiltant.ResInCharp2,CossartPiltant.ResOfSingsOfArithmeticalThreefolds} or $R$ has characteristic zero \cite{HironakaResolution,Tempkin.DesingularizationQuasiExcellentChar0}.
\end{remark}


It is not clear to us whether or not $J^{\Bir}$ is a closure operation.  In other words:

\begin{question}  With notation as above, is $(J^{\Bir})^{\Bir} = J^{\Bir}$? \end{question}

However, we obtain the following result.
\begin{proposition}
\label{prop.BirClosureVersionOfBS}
    Suppose $R$ is a reduced Noetherian ring and $J = (f_1, \dots, f_n)$ is an ideal.  Then $\overline{J^{n+k-1}} \subseteq (J^k)^{\Bir}$ for any $k \geq 1$.  
    The same holds if $J \subseteq R$ is an ideal that locally has analytic spread $\leq n$.  
\end{proposition}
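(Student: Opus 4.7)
The plan is to apply \autoref{thm.Main} with $Y$ the blowup of $\overline{J^{n+k-1}}$, and then transfer the resulting vanishing from $L^k(\underline{f})$ to $R/J^k$ by functoriality of the canonical surjection of complexes $L^k(\underline{f}) \to R/J^k$.

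First I would reduce to the case that $R$ is a domain.  Let $Q_1,\dots,Q_m$ be the minimal primes of $R$ and set $S=\prod_i R/Q_i$.  Then $R\hookrightarrow S$ is a finite birational extension, so by the lemma preceding this proposition $(J^k)^{\Bir} = (J^kS)^{\Bir}\cap R$.  Since $\Spec S$ is a disjoint union, $(J^kS)^{\Bir}$ decomposes as $\prod_i \bigl((JR/Q_i)^k\bigr)^{\Bir}$; combined with the inclusion $\overline{J^{n+k-1}}\cdot R/Q_i \subseteq \overline{(JR/Q_i)^{n+k-1}}$, this reduces the statement to each factor $R/Q_i$, which is a domain.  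In the domain case either $J=0$ (trivial) or $J$ contains a nonzerodivisor, so $\pi\colon Y\to\Spec R$ is proper birational ($Y$ is irreducible since its affine charts are subrings of $\Frac R$, and is an isomorphism over the dense open where $\overline{J^{n+k-1}}$ is locally principal).

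By \autoref{thm.Main}, $\overline{J^{n+k-1}}$ maps to zero in $H_0\bigl(L^k(\underline{f})\otimes^\bL_R \myR\Gamma(Y,\cO_Y)\bigr)$.  The canonical surjection $L^k(\underline{f}) \to R/J^k$ (sending the degree-zero term $R=L^k_0$ to its quotient $R/J^k$ and killing higher terms; this is a chain map because the image of the last differential is exactly $J^k$) induces, upon applying $(-)\otimes^\bL_R \myR\Gamma(Y,\cO_Y)$ and taking $H_0$, a commutative square
\[
\xymatrix@C=1em{
R \ar[r] \ar@{=}[d] & H_0\bigl(L^k(\underline{f}) \otimes^\bL_R \myR\Gamma(Y, \cO_Y)\bigr) \ar[d] \\
R \ar[r] & H_0\bigl(R/J^k \otimes^\bL_R \myR\Gamma(Y, \cO_Y)\bigr)
}
\]
whose bottom row is precisely the map used in the definition of $(J^k)^{\Bir}$.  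A diagram chase then places $\overline{J^{n+k-1}}$ inside $(J^k)^{\Bir}$.

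For the analytic spread variant, I would localize (legitimate since $(-)^{\Bir}$ commutes with localization) and pass to a faithfully flat extension with infinite residue field to obtain an $n$-generated reduction $J'$ with $\overline{J'^{n+k-1}} = \overline{J^{n+k-1}}$, exactly as in the proof of \autoref{thm.StrongLipmanTeissier}.  Applying the previous step to $J'$ then gives the containment over the extension, and one concludes using $J'^k\subseteq J^k$ (so $(J'^k)^{\Bir}\subseteq (J^k)^{\Bir}$).  The main obstacle I expect is descending membership in the birational pre-closure back to $R$ along the flat extension: flat base change gives $H_0\bigl(R/J^k\otimes^\bL_R \myR\Gamma(Y,\cO_Y)\bigr)\otimes_R R(x) \cong H_0\bigl(R(x)/J^k R(x)\otimes^\bL_{R(x)} \myR\Gamma(Y_{R(x)},\cO)\bigr)$, and this combined with spreading out a proper birational witness from $R(x)$ to a model over $R$ should do the job; the delicate point is that the definition of $(-)^{\Bir}$ restricts to proper birational (and not arbitrary proper) morphisms, so the spread-out must be chosen to preserve this.
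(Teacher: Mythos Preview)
Your proof of the first (\,$n$-generated\,) part is essentially identical to the paper's: reduce to a domain, form the blowup $Y$ of $\overline{J^{n+k-1}}$, apply \autoref{thm.Main}, and push forward along $L^k(\underline f)\to R/J^k$.

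For the analytic-spread part, your instinct is right but the ``spreading out'' worry is unnecessary, and the paper's argument shows why. Rather than first passing to the extension $S$ (the paper uses $S=R^{\sh}$), producing a witness there, and then trying to descend that witness, one fixes the witness over $R$ \emph{before} extending: take $Y\to\Spec R$ to be the blowup of $\overline{J^{n+k-1}}$. Since integral closure and blowups commute with the flat base change $R\to S$, the base change $Y_S$ is exactly the blowup of $\overline{J^{n+k-1}}S=\overline{J'^{n+k-1}}$ over $S$, i.e.\ the very scheme to which \autoref{thm.Main} applies for $J'$. Moreover the formation of
\[
\ker\Big(R \to H_0\big( R/J^k \otimes^{\myL} \myR\Gamma(Y, \cO_Y) \big) \Big)
\]
commutes with $-\otimes_R S$ (flat base change for $\myR\Gamma$ plus exactness of $-\otimes_R S$). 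Hence the chain $L^k(\underline f)\to S/J'^k\to S/J^kS$ over $S$ places $\overline{J'^{n+k-1}}=\overline{J^{n+k-1}}S$ inside that kernel tensored with $S$, and faithful flatness gives the containment back in $R$. No model-spreading is needed because the relevant proper birational map was already living over $R$.
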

\begin{proof}
As both integral closure and birational closure can be computed modulo minimal primes we may assume that $R$ is a domain.  If $J = 0$ then the statement is clear.
Set $Y \to \Spec R$ to be the blowup of $\overline{J^{n+k-1}}$.  This map is proper and birational since $R$ is a domain and $J$ is nonzero.
   We see there is a map from $L^k(\underline{f})$ to its bottom homology, $L^k(\underline{f}) \to R/J^k$.  Hence we see 
   \[
    \overline{J^{n+k-1}} \subseteq \ker\Big(R \to H_0\big( L^k(\underline{f}) \otimes^{\myL} \myR\Gamma(Y, \cO_Y) \big) \Big) \subseteq \ker\Big(R \to H_0\big( R/J^k \otimes^{\myL} \myR\Gamma(Y, \cO_Y) \big) \Big) \subseteq (J^k)^{\Bir}.
   \]

   For the second statement, as the statement is local (indeed, the formation of $J^{\Bir}$ commutes with localization), we assume that $R$ is local.  Then one argues again as in \autoref{thm.StrongLipmanTeissier} to replace $J$ by its minimal reduction.  
\end{proof}

\autoref{prop.BirClosureVersionOfBS} implies many other well-known closure-versions of the \myBrianconSkoda theorem.

\subsection{Characteristic zero}

In combination with \autoref{lem.BirClosureContainment}, \autoref{prop.BirClosureVersionOfBS} gives the following answer to \cite[Question 6.7]{EpsteinMcDonaldRGSchwede}.

\begin{cor}
    Let $R$ be an excellent ring of characteristic zero and $J=(f_1,\dots,f_n)$.  Then 
    \[
        (J^k)^{\Bir} = (J^k)^{\Hir}  := \ker\big(R \to H_0(R/J^k \otimes^{\myL} \myR\Gamma(X, \cO_X))\big)
    \]
    where $X \to \Spec R$ is any resolution of singularities.
    Hence  
    $\overline{J^{n+k-1}}\subseteq(J^k)^{\Hir}
    $
    for all $k\geq1$.
\end{cor}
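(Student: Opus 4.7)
The plan is to derive this corollary as an immediate assembly of three previously established ingredients. First, since $R$ is excellent of characteristic zero, Hironaka--Temkin resolution of singularities (\cite{HironakaResolution,Tempkin.DesingularizationQuasiExcellentChar0}) provides a proper birational map $\pi : X \to \Spec R$ with $X$ regular. Regular rings are pseudo-rational by \cite[Section 4]{LipmanTeissierPseudoRational}, so $X$ is pseudo-rational; applying the scheme version of \autoref{lem.BirationalSplinterImpliesBirationallyPure} then shows that $X$ is a global birational derived splinter.

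With $X$ in hand, I would simply invoke \autoref{lem.BirClosureContainment}: every proper birational $Y \to \Spec R$ can be dominated by a further proper birational $Y \to X$, and the global birational derived splinter property of $X$ gives a splitting $\cO_X \to \myR\rho_* \cO_Y$ for each such $\rho$, so the colimit defining $J^{\Bir}$ is already computed by $X$. This yields
\[
    (J^k)^{\Bir} \;=\; \ker\!\bigl(R \to H_0(R/J^k \otimes^{\myL} \myR\Gamma(X, \cO_X))\bigr),
\]
and the right-hand side is precisely $(J^k)^{\Hir}$ by definition. As a byproduct, the Hironaka closure is seen to be independent of the choice of resolution, since it agrees with the intrinsically defined $(J^k)^{\Bir}$. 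The \myBrianconSkoda containment $\overline{J^{n+k-1}} \subseteq (J^k)^{\Hir}$ then follows directly from \autoref{prop.BirClosureVersionOfBS}.

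There is essentially no hard step, since all the substantial work — our derived \myBrianconSkoda theorem, the pseudo-rational-implies-splinter argument in the scheme case via Grothendieck duality, and characteristic-zero resolution of singularities for excellent rings — is already in place. The only minor points to track are that one should work with $R$ reduced (harmless since $\overline{(-)}$ and the formation of $J^{\Bir}$ commute with passing to $R_{\mathrm{red}}$ and since a resolution of $R_{\mathrm{red}}$ serves the purpose), and that although \autoref{lem.BirClosureContainment} is stated for a single proper birational $X$, any two resolutions are dominated by a third in characteristic zero, which again gives independence of the choice.
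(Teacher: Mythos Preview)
Your proof is correct and follows exactly the path the paper intends: existence of resolutions for excellent characteristic-zero rings, \autoref{lem.BirClosureContainment} to identify $(J^k)^{\Bir}$ with the kernel computed on any resolution (hence with $(J^k)^{\Hir}$), and \autoref{prop.BirClosureVersionOfBS} for the \myBrianconSkoda containment. The paper in fact states this corollary without a written proof, treating it as immediate from these ingredients.
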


Statements for other characteristic zero closures follow from the characteristic $p > 0$ results below.

\subsection{Characteristic $p > 0$} 

Suppose $R$ is a reduced Noetherian ring of characteristic $p > 0$.  Let $R^+ = \prod_Q (R/Q)^+$ where $Q$ runs over the minimal primes of $R$.  For any ideal $J \subseteq R$, we denote by $J^+ := (J R^+) \cap R$.  Note $J^+ \subseteq J^*$ by \cite[Lemma 4.11]{HHmain}.

\begin{proposition}[{\cite[Theorem 7.1]{HochsterHunekeApplicationsofBigCM}}]
\label{prop.CharPContainment}
    Suppose $R$ is a reduced Noetherian ring and $I \subseteq R$ is any ideal.  Then $I^{\Bir} \subseteq I^+$.  As a consequence, if $J = (f_1, \dots, f_n) \subseteq R$, then we have  
    \[
        \overline{J^{{n+k-1}}} \subseteq (J^k)^+
    \]
    for every integer $k > 0$.
    {Hence we also obtain $\overline{J^{{n+k-1}}} \subseteq (J^k)^*$ recovering \cite[Theorem 5.4]{HHmain}.}
\end{proposition}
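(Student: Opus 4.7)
The plan is to first prove $I^{\Bir} \subseteq I^+$ and then to assemble the remaining conclusions from tools already in hand. Combining $I^{\Bir} \subseteq I^+$ with \autoref{prop.BirClosureVersionOfBS} (which gives $\overline{J^{n+k-1}} \subseteq (J^k)^{\Bir}$) immediately yields $\overline{J^{n+k-1}} \subseteq (J^k)^+$; the containment $(J^k)^+ \subseteq (J^k)^*$ from \cite[Lemma 4.11]{HHmain} then recovers \cite[Theorem 5.4]{HHmain}.

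The essential input for $I^{\Bir} \subseteq I^+$ is the factorization recorded in the introduction preceding the statement: in characteristic $p > 0$, for any proper birational map $Y \to \Spec R$, the canonical inclusion $R \to R^+$ admits a factorization
\[
R \longrightarrow \myR\Gamma(Y, \cO_Y) \longrightarrow R^+
\]
in $D(R)$; see \cite{BhattDerivedDirectSummandCharP}, building on \cite{HochsterHuneke.InfiniteIntegralExtensions,HunekeLyubeznikAbsoluteIntegralExtensions}. Concretely, after base change along a sufficiently large finite extension $R \subseteq S \subseteq R^+$ the higher cohomology of $\cO_{Y_S}$ is killed, and taking the colimit along the filtered system of such $S$ inside $R^+$ produces the second arrow.

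Now suppose $r \in I^{\Bir}$. By definition, there exists a proper birational $Y \to \Spec R$ such that $r$ maps to zero in $H_0\bigl(R/I \otimes^{\myL}_R \myR\Gamma(Y, \cO_Y)\bigr)$. Applying $H_0\bigl(R/I \otimes^{\myL}_R (-)\bigr)$ to the second arrow of the factorization above produces a map
\[
H_0\bigl(R/I \otimes^{\myL}_R \myR\Gamma(Y, \cO_Y)\bigr) \longrightarrow H_0\bigl(R/I \otimes^{\myL}_R R^+\bigr) = R^+/IR^+
\]
through which the image of $r$ factors. Since this image vanishes upstairs, it vanishes in $R^+/IR^+$ as well, which is to say $r \in IR^+ \cap R = I^+$.

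The main obstacle, and essentially the only non-formal content of the argument, is producing the factorization $R \to \myR\Gamma(Y, \cO_Y) \to R^+$; this is where characteristic $p > 0$ enters decisively, through Bhatt's result and its antecedents. Everything else is a direct unwinding of the definition of the birational pre-closure, and the consequences for $\overline{J^{n+k-1}}$ follow by invoking the results already established.
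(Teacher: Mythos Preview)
Your proof is correct and follows essentially the same route as the paper: both invoke Bhatt's factorization $R \to \myR\Gamma(Y,\cO_Y) \to R^+$ from \cite{BhattDerivedDirectSummandCharP}, apply $H_0(R/I \otimes^{\myL}_R -)$ to conclude $I^{\Bir} \subseteq I^+$, and then read off the \myBrianconSkoda consequences from \autoref{prop.BirClosureVersionOfBS} and \cite[Lemma 4.11]{HHmain}.
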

\begin{proof}
As $R$ is Noetherian, we may fix $\pi : Y \to X$ proper birational computing $J^{\Bir}$. 
    By \cite[Theorem 0.4]{BhattDerivedDirectSummandCharP}, there is a map $\myR\Gamma(Y, \cO_Y) \to R^+$ factoring the inclusion $R \to R^+$.
    Hence we immediately see that 
    \[
        \ker\Big(R \to H_0(R/J^k \otimes^{\myL} \myR\Gamma(Y, \cO_Y)) \Big) \subseteq \ker\Big(R \to R/J^k \otimes R^+ \Big) = (J^k)^+.
    \]
    The second and third statements follow immediately.
\end{proof}

\subsection{Mixed characteristic}

The same argument goes through without substantial change in the mixed characteristic case thanks to \cite{BhattAbsoluteIntegralClosure}. In fact, \autoref{prop.MixedCharComparisonToClosure} below strictly contains \autoref{prop.CharPContainment} since the $p$-adic completion is a trivial operation in characteristic $p$.

Let $R$ denote a Noetherian reduced ring with $p$ in its Jacobson radical.  By $\widehat{R^+}$ we mean the $p$-adic completion of $R^+ = \prod_Q (R/Q)^+$ where again $Q$ runs over the minimal primes of $R$.

\begin{proposition}[{\cite{RodriguezVillalobosSchwede.BrianconSkodaProperty, heitmannepf}}]
\label{prop.MixedCharComparisonToClosure}
    Suppose $R$ is a reduced Noetherian ring with $p$ in its Jacobson radical and suppose that $I \subseteq R$ is an ideal.  Then $I^{\Bir} \subseteq I\widehat{R^+} \cap R \subseteq I^{\mathrm{epf}}$ where $\widehat{R^+}$ denotes the $p$-adic completion of $R^+$ and $\epf$ denotes the full extended plus closure. 
    
    As a consequence, if $J = (f_1, \dots, f_n) \subseteq R$, then
    \[
        \overline{J^{{n+k-1}}} \subseteq (J^k) \widehat{R^+} \cap R
    \] 
    recovering \cite[Theorem 4.1]{RodriguezVillalobosSchwede.BrianconSkodaProperty}.  Hence in an arbitrary ring, we also recover $\overline{J^{n+k-1}} \subseteq (J^k)^{\ep}$ of \cite[Theorem 4.2]{heitmannepf} where $\ep$ denotes the extended plus closure.
\end{proposition}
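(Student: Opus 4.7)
The plan is to mimic the proof of \autoref{prop.CharPContainment}, replacing Bhatt's characteristic $p$ factorization through $R^+$ with the mixed characteristic factorization through $\widehat{R^+}$ provided by \cite{BhattAbsoluteIntegralClosure}. The argument has three parts: the containment $I^{\Bir} \subseteq I\widehat{R^+} \cap R$, the containment $I\widehat{R^+} \cap R \subseteq I^{\mathrm{epf}}$, and the \myBrianconSkoda corollaries.

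For the first containment, I would use Chow's lemma and Noetherianity to pick a single proper birational $\pi : Y \to \Spec R$ computing $I^{\Bir}$. By \cite{BhattAbsoluteIntegralClosure}, there is a map $\myR\Gamma(Y, \cO_Y) \to \widehat{R^+}$ in $D(R)$ factoring the canonical $R \to \widehat{R^+}$ (morally, pulling back to $R^+$ kills the higher cohomology of the blowup after $p$-adic completion). Applying $H_0(R/I \otimes^{\myL} -)$ and using that $H_0(R/I \otimes^{\myL} \widehat{R^+}) = \widehat{R^+}/I\widehat{R^+}$ then yields
\[
    \ker\Big(R \to H_0\big(R/I \otimes^{\myL} \myR\Gamma(Y, \cO_Y)\big)\Big) \subseteq \ker\Big(R \to \widehat{R^+}/I\widehat{R^+}\Big) = I\widehat{R^+} \cap R,
\]
which is exactly $I^{\Bir} \subseteq I\widehat{R^+} \cap R$.

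The containment $I\widehat{R^+} \cap R \subseteq I^{\mathrm{epf}}$ follows by unraveling Heitmann's definition of full extended plus closure: any $h \in I\widehat{R^+}$ admits approximations $h \equiv i_n \pmod{p^n R^+}$ with $i_n \in IR^+$ for every $n \geq 1$, which is precisely the defining condition for epf-closure, as worked out in \cite{heitmannepf} and \cite{RodriguezVillalobosSchwede.BrianconSkodaProperty}. The \myBrianconSkoda consequences then follow by combining \autoref{prop.BirClosureVersionOfBS} (which gives $\overline{J^{n+k-1}} \subseteq (J^k)^{\Bir}$) with the first containment; the statement for an arbitrary ring and the extended plus closure $\ep$ of \cite{heitmannepf} follows by localizing at maximal ideals of positive residue characteristic and reducing to the case just handled.

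The main obstacle is invoking Bhatt's factorization in the correct generality. His theorem is most naturally stated for $p$-complete Noetherian local domains, whereas our $R$ is any Noetherian reduced ring with $p$ in its Jacobson radical. Bridging the two settings requires localizing at each maximal ideal (necessarily containing $p$) and $\m$-adically completing, then checking that both $(-)^{\Bir}$ and $(-)\widehat{R^+} \cap R$ are compatible with this faithfully flat base change. This descent step is essentially the argument of \cite{RodriguezVillalobosSchwede.BrianconSkodaProperty}, whose main theorem is precisely the \myBrianconSkoda consequence being derived as a corollary here.
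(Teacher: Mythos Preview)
Your proposal is correct and takes essentially the same approach as the paper. The only notable difference is that your final paragraph's concern is handled more directly there: the authors first reduce to the case of a domain by working modulo minimal primes, then apply \cite[Theorem~3.12]{BhattAbsoluteIntegralClosure} together with derived Nakayama to obtain the factorization $R \to \myR\Gamma(Y,\cO_Y) \to \widehat{R^+}$ without any localization-and-completion descent; for the $\ep$ claim in an arbitrary ring they cite \cite[Proposition~1.2 and Lemma~2.3]{heitmannepf} to reduce to a local domain essentially of finite type over $\bZ$, where $\ep$ and $\epf$ coincide.
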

\begin{proof}
    We may assume that $R$ is a domain as all objects can be computed mod minimal primes.  We then assume that $R$ has mixed characteristic $(0, p>0)$ as the equal characteristic $p > 0$ case was done above.  
    Suppose $Y \to \Spec R$ is proper birational.  By 
    \cite[Theorem 3.12]{BhattAbsoluteIntegralClosure} and derived Nakayama, the derived completion of $\myR\Gamma(Y^+, \cO_{Y^+})$ coincides with $\widehat{R^+}$.  Hence we have a factorization 
    \[
        R \to \myR\Gamma(Y, \cO_Y) \to \widehat{R^+}.
    \]
    The first result follows.  Note $I \widehat{R^+} \cap R \subseteq I^{\epf}$ essentially by definition of full extended plus closure in \cite{heitmannepf}. For the statement about  extended plus closure $(-)^{\ep}$, by using \cite[Proposition 1.2 and Lemma 2.3]{heitmannepf} we may reduce to the case of a local domain essentially of finite type over $\bZ$ with some prime $p > 0$ in its maximal ideal.  In particular, $\ep$ coincides with $\epf$ in this case.
\end{proof}

\section{Uniform Brian\c{c}on-Skoda and uniform Artin-Rees}
\label{sec.UniformBS}
In this section, we prove two conjectures of Huneke \cite[Conjecture 1.3 and Conjecture 1.4]{Huneke.UniformBoundsInNoetherianRings} as a major application of our results and methods in previous sections.  
We first give an outline of the strategy. Suppose $R$ is a domain and $\pi : X \to \Spec R$ is a proper surjective map with $X$ an integral global birational derived splinter (e.g., $\pi$ could be a regular alteration). Suppose we have a map $\psi : \myR\Gamma(X, \cO_X) \to R$ such that the composition 
$$R \to \myR\Gamma(X, \cO_X) \xrightarrow{\psi} R$$ 
is multiplication by $c \neq 0$ (such maps always exist since $\myR\Gamma(X, \cO_X) \in D^b_{\coh}(R)$ and since such maps exist generically).
Let $I = (f_1, \dots, f_n)\subseteq R$ and set $Y' \to X$ to be the blowup of $\overline{I^{n+k-1}} \cO_X$ (so that the composition $Y' \to \Spec R$ factors through $Y \to \Spec R$, the blowup of $\overline{I^{n+k-1}}$), then the argument from \autoref{rem.SplittingByMultByC} immediately implies that we have 
    $
        c \overline{I^{n+k-1}} \subseteq I^k.
    $  
Of particular importance here is that $c$ does not depend on $I$ and $k$. The existence of such a uniform $c$, which forces $\mathrm{T}_n(R) := \bigcap_{m,I} (I^{m - n} : \overline{I^m})$ to be nonzero for some $n$ (which could be taken as $\dim R$ eventually, by our method), is one of Huneke's key tools for proving uniform \myBrianconSkoda and uniform Artin-Rees properties in \cite{Huneke.UniformBoundsInNoetherianRings}.

It was known, for rings that are quotients of regular rings, that the existence of projective resolutions of singularities implies these uniformity statements via a different argument, see  \cite{Huneke.DesingsAndUniformArtinRees}. Furthermore, in unpublished work, Datta showed that Huneke's argument also goes through assuming only that projective regular alterations exist \cite{DattaPrivateCommunication}.  

However, the existence of regular alterations is not known in full generality for excellent domains.  Instead, we adapt the argument above to use Gabber's weak local uniformization (\cite[Expos\'{e} VII, Theorem 1.1]{GabberWeakAlteration}), and instead of using regular alterations, we use a certain diagram of schemes.
The other key ingredient in Huneke's recipe, that $0 \neq \mathrm{CM}(R)$, where $\mathrm{CM}(R)$ is the elements of $R$ that annihilate the homology of complexes of finitely generated free modules satisfying the standard conditions on height and rank, was proven in \cite{ZhouUniformAnnihilators}. Alternately, this also follows quickly by combining \cite{LyuFormalLifting} and \cite{KawasakiMacaulayfication} up to a faithfully flat extension which is enough for us.

We begin with a lemma about truncated hypercovers in the alteration topology.

\begin{lemma}
\label{lem.TruncatedAlteration}
    Let $X$ be an irreducible, quasi-excellent Noetherian scheme. Then for every integer $n\geq 0$, we can construct a commutative diagram 
    \[
        \xymatrix{
            {^n}V_{\leq n} \ar[r] \ar[d] & {^n}V_\bullet  \ar[r] & {^n}V\ar[d]\\
            X_{\leq n} \ar[r] & X_\bullet \ar[r] & X
        }
        \]
    where ${^n}V\to X$ is an alteration, ${^n}V_{\bullet}\to {^n}V$ is a Zariski hypercover, and $X_{\bullet}\to X$ is a hypercover in the alteration topology with each $X_i$ regular. Here $V_{\leq n}$ and $X_{\leq n}$ are the $n$-truncations of ${^n}V_\bullet$ and $X_\bullet$ respectively.
\end{lemma}

\begin{proof}

We will repeatedly use Gabber's weak local uniformization theorem \cite[Expos\'{e} VII, Theorem 1.1]{GabberWeakAlteration}. We begin by selecting a cover $\{X_0^i\to X\}$ in the alteration topology so that each $X_0^i$ is regular by weak local uniformization. By 
\cite[Expos\'{e} II, Theorem 3.2.1]{GabberWeakAlteration}, there exists an alteration ${^0}V\to X$ and a {Zariski} open cover ${^0}V^i$ of ${^0}V$ which fits into the following commutative diagram
\[\xymatrix{
{^0}V^{i} \ar[r] \ar[d] & {^0}V \ar[d] \\
X_0^i \ar[r] & X
.}
\]

Next, we find a cover $\{{^0}Y^j\to {^0}V\}$ in the alteration topology so that each ${^0}Y^j$ is regular by weak local uniformization. Again by \cite[Expos\'{e} II, Theorem 3.2.1]{GabberWeakAlteration}, there exists an alteration ${^1}V\to {^0}V$ and a Zariski open cover ${^1}\widetilde{V}^j$ of ${^1}V$ which fits into the following commutative diagram
\[\xymatrix{
{^1}\widetilde{V}^{j} \ar[r] \ar[d] & {^1}V \ar[d] \\
{^0}Y^j \ar[r] & {^0}V
.}
\]
At this point, we note that each ${^0}V^a \times_{{^0}V} {^0}V^b$ admits a regular cover in the alteration topology
$$\{{^0}Y^j\times_{{^0}V}({^0}V^a \times_{{^0}V} {^0}V^b) \to {^0}V^a \times_{{^0}V} {^0}V^b\}.$$
Now, we set 
${^1}V^i:= {^0}V^i\times_{{^0}V}{^1}V$ (a Zariski open subset of ${^1}V$).  By the commutative diagram above and base change, we find that 
$$\{{^1}\widetilde{V}^j \times_{{^1}V}({^1}V^a  \times_{{^1}V} {^1}V^b) \to {^1}V^a \times_{{^1}V} {^1}V^b\}$$
is a Zariski open cover and that each ${^1}\widetilde{V}^j \times_{{^1}V}({^1}V^a \times_{{^1}V} {^1}V^b)$ factors through the regular scheme ${^0}Y^j\times_{{^0}V}({^0}V^a \times_{{^0}V} {^0}V^b)$, which in turn maps to $X_0^a \times^{\text{alt}}_X X_0^b$ (which is the closed subscheme of $X_0^a\times_XX_0^b$ made up of components that dominate $X$, in particular it maps to $X_0^a\times_XX_0^b$).\footnote{The alteration topology admits fiber products, denoted $\times^{\text{alt}}$, thanks to \cite[Expos\'{e} II, Proposition 1.2.6]{GabberWeakAlteration}.  It is exactly the the dominant component(s) of the usual fiber product. Under this fiber product, $\{{^0}Y^j\times_{{^0}V}({^0}V^a \times_{{}^0V}{^0}V^b) \to X_0^a \times^{\text{alt}}_X X_0^b\}$ is a cover in the alteration topology.} In sum, we find that ${^1}V$ has a $1$-truncated Zariski hypercover ${^1}V_{\leq 1} \to {^1}V$ with
$${^1}V_0=\{{^1}V^i\}, \text{ and } {^1}V_1=\{{^1}\widetilde{V}^j \times_{{^1}V}({^1}V^a \times_{{^1}V} {^1}V^b)\}.$$ 
Moreover, this ${^1}V_{\leq 1}$ dominates the following $1$-truncated {
alteration hypercover $X_{\leq 1}$ by regular schemes}, where
$$X_0=\{X^i_0\}, \text{ and } X_1=\{{^0}Y^j\times_{{^0}V}({^0}V^a \times_{{^0}V} {^0}V^b) \}.$$

{Note that as written, the above is not technically a hypercover, as it only has face maps and no degeneracy maps, and hypercovers are simplicial objects. However, using the notion of a \emph{split} simplicial object (see for instance \cite[6.2.2]{DeligneHodgeTheory3}, \cite[ex~Vbis~5.1]{ToposTheoryEtaleCohomology}, and \cite[Remark~2.14]{LeeLocalAcyclicFibrations}), we only need to specify the non-degenerate simplices appearing in any degree in order to define a unique simplicial object. More precisely, in the language of \cite[Theorem~4.12]{ConradCohomologicalDescent}, given a split $n$-truncated simplicial scheme $W$ and a collection $N$ that we would like to adjoin as non-degenerate $(n+1)$-simplices and a map $N\to(\text{cosk}_nW)_{n+1}$, there is a unique, up to isomorphism, split $(n+1)$-truncated simplicial scheme $W'$ such that $W'_{\leq n}=W_{\leq n}$ and the non-degenerate $(n+1)$ simplices of $W'$ are $N$. Thus, we will abuse notation and specify only the nondegenerate simplices in each degree in what follows.}

We will now essentially repeat this procedure to inductively construct ${^m}V$, $m\geq 1$, with an $m$-truncated Zariski hypercover ${^m}V_{\leq m} \to {^m}V$ that dominates an {
$m$-truncated alteration hypercover $X_{\leq m}$} so that each $X_i$ is regular. Indeed, suppose ${^m}V$ is constructed, we use weak local uniformization to obtain a regular cover $\{{^m}Y^j\to {^m}V\}$ in the alteration topology, and find ${^{m+1}}V$ an alteration over ${^m}V$ together with a {Zariski} open cover $\{{^{m+1}}\widetilde{V}^j\}$ dominating $\{{^m}Y^j\}$. The $m$-truncated Zariski hypercover ${^m}V_{\leq m}$ pulls back to an $m$-truncated Zariski hypercover ${^{m+1}}V_{\leq m}$ of ${^{m+1}}V$. For each element $U \in (\text{cosk}_{m}({^{m+1}}V_{\leq m}))_{m+1}$, {noting it is the pullback of some} $U' \in (\text{cosk}_{m}({^{m}}V_{\leq m}))_{m+1}$, consider the open cover
$$\{{^{m+1}}\widetilde{V}^j\times_{{^{m+1}}V}U\to U\}.$$
The collection of all these $\{{^{m+1}}\widetilde{V}^j\times_{{^{m+1}}V}U\}_{U,j}$ thus form a Zariski open cover of $(\text{cosk}_{m}({^{m+1}}V_{\leq m}))_{m+1}$. Moreover, each ${^{m+1}}\widetilde{V}^j\times_{{^{m+1}}V}U$ maps to the regular scheme ${^m}Y^j \times_{{^m}V}U'$. 
In sum, we have 
\[\xymatrix{
\{{^{m+1}}\widetilde{V}^j\times_{{^{m+1}}V}U\}_{U,j} \ar[r]\ar[d] & (\text{cosk}_{m}({^{m+1}}V_{\leq m}))_{m+1} \ar[d] \\
\{{^m}Y^j \times_{{^m}V}U'\}_{U', j} \ar[r] \ar@{.>}[rd]& (\text{cosk}_{m}({^m}V_{\leq m}))_{m+1} \ar[d] \\
& (\text{cosk}_{m}(^{m}X_{\leq m}))_{m+1}
.}
\]
Here $(\text{cosk}_{m}(^{m}X_{\leq m}))_{m+1}$ is computed in the alteration topology (so it is built by fiber products in the alteration topology), and the dotted arrow above is a cover in the alteration topology. We now set ${^{m+1}}V_{m+1} := \{{^{m+1}}\widetilde{V}^j\times_{{^{m+1}}V}U\}_{U,j}$ and $X_{m+1}:= \{{^m}Y^j \times_{{^m}V}U'\}_{U', j}$. It is then readily checked that we have the following commutative diagram
\[\xymatrix{
{^{m+1}}V_{\leq m+1} \ar[r] \ar[d] & {^{m+1}}V \ar[d] \\
X_{\leq m+1} \ar[r] & X 
.}
\]

Finally, we set ${^n}V_\bullet := \text{cosk}_{n}({}^nV_{\leq n})$ and we extend $X_{\leq n}$ to {
an alteration hypercover $X_\bullet$} so that each $X_i$ is regular by weak local uniformization. Note that ${}^nV_\bullet$ is a Zariski hypercover of ${}^nV$. We obtain the desired commutative diagram
\[\xymatrix{
{}^nV_{\leq n} \ar[r] \ar[d] & {}^nV_\bullet  \ar[r] & {}^nV\ar[d]\\
X_{\leq n} \ar[r] & X_\bullet \ar[r] & X
.}
\]
Note that there might not be a map from ${}^nV_\bullet\to X_\bullet$ fitting the commutative diagram above though, since we built ${}^nV_\bullet$ out of $V_{\leq n}$ while one might need further simplices to build $X_{\bullet}$ to keep the terms nonsingular.
\end{proof}

\begin{remark}
In the proof of \autoref{lem.TruncatedAlteration}, instead of writing a Zariski open cover $\{{^{m+1}}\widetilde{V}^j\}$ in each step, we could use a single cover ${^{m+1}}\widetilde{V}:= \coprod_j{^{m+1}}\widetilde{V}^j$ to reduce the excess of indices and simplify the  notation (the proof will not be altered too much). We decided to write down the open cover and keep the additional index $j$ to make the construction clear and to remind ourselves that each ${^{m+1}}\widetilde{V}^j$ dominates a ${^{m}}{Y}^j$ (which is based on Gabber's weak local uniformization).
\end{remark}

We are now ready to prove our main result which will be the key ingredient in proving Huneke's conjecture.

\begin{theorem}
\label{thm.UniformBS}
Let $R$ be an excellent domain of dimension $d$. Then ${\mathrm T}_{d}(R) \neq 0$.  
\end{theorem}
\begin{proof}
    First, note that we can replace $R$ by $R\otimes_{\Z}\Z(t)$ {(where $\bZ(t)$ denotes $\bZ[t]$ localized at the multiplicative set of all elements not contained in any $p \bZ[t]$)} to assume that all residue fields of $R$ are infinite. 
    
    Showing that $\text{T}_{d}(R)=\bigcap_{I, n}(I^{n-d}: \overline{I^n})$ is nonzero is the same as showing that $\bigcap_{I, k\geq 1}(I^{k}: \overline{I^{d+k}})$ is nonzero. Since $R$ has infinite residue fields, every ideal $I\subseteq R$ admits a minimal reduction $J$ generated by at most $d+1$ elements by \cite[Theorem 8.7.3]{SwansonHuneke.integralclosure}. By \autoref{thm.DerivedMain}, if $Y$ is the normalized blowup of $J$ (which agrees with the blowup of $\overline{J^N}$ for some $N\gg0$), then the natural map 
        $$\overline{I^{d+k}} \to R\to \myR\Gamma(Y, \cO_Y) \to L^k(J)\otimes^{\myL} \myR\Gamma(Y, \cO_Y)$$
    is the zero map in $D(R)$.
    
    Let $X=\Spec R$. Applying \autoref{lem.TruncatedAlteration} with $n=d+2$ and setting $V:={}^{d+2}V$ and $V_\bullet:= {}^{d+2}V_\bullet$, we have a commutative diagram 
        \[
        \xymatrix{
            V_{\leq d+2} \ar[r] \ar[d] & V_\bullet  \ar[r] & V\ar[d]\\
            X_{\leq d+2} \ar[r] & X_\bullet \ar[r] & X
        }
        \]
        where $V_\bullet$ is a Zariski hypercover of $V$, $V\to X$ is an alteration, and $X_\bullet\to X$ is an alteration hypercover of $X$ with each $X_i$ regular. 
        
    Let $Y_i$ be the blowup of $\overline{J^N}\cO_{X_i}$ for each $X_i$. We have a commutative diagram of diagrams of schemes:
        \[
        \xymatrix{
        Y_\bullet \ar[r] \ar[d]  & Y \ar[d] \\
        X_\bullet \ar[r] & X
        }
        \]
    which induces a commutative diagram: 
        \[
        \xymatrix{
        \overline{I^{d+k}} \ar[r] & R \ar[d] \ar[r] & \myR\Gamma(Y, \cO_Y) \ar[r] \ar[d] & L^k(J)\otimes^{\myL} \myR\Gamma(Y, \cO_Y) \ar[d] \\
        & \myR\Gamma(X_\bullet, \cO_{X_\bullet}) \ar[r] & \myR\Gamma(Y_\bullet, \cO_{Y_\bullet}) \ar[r] &  L^k(J)\otimes^{\myL} \myR\Gamma(Y_\bullet, \cO_{Y_\bullet}).
        }
        \]
    In particular, the diagram guarantees that the natural map 
        $$\overline{I^{d+k}} \to R\to \myR\Gamma(X_\bullet, \cO_{X_\bullet})  \to L^k(J)\otimes^{\myL} \myR\Gamma(Y_\bullet, \cO_{Y_\bullet})$$
    is zero in $D(R)$.

    We now repeat an argument of Bhatt's (\cite[Remark 3.4]{MaSchwede.KunzTypeCharacterization}, \cite[Lemma 3.16]{LankVenkatesh.TriangulatedCategoriesOfSingularities}).  We note that since each $X_i$ is regular, each $\myR{\pi_i}_*\cO_{Y_i} \in D_{\text{perf}}(X_i)$ and is a commutative algebra object in $D(X_i)$ (here $\pi_i$ denotes the map $Y_i\to X_i$). Therefore we have canonical maps 
        $$\cO_{X_i} \to \myR{\pi_i}_*\cO_{Y_i} \xrightarrow{s} \myR\sHom_{\cO_{X_i}}(\myR{\pi_i}_*\cO_{Y_i}, \myR{\pi_i}_*\cO_{Y_i}) \xrightarrow{t} \cO_{X_i}$$
    where $s$ comes from the commutative algebra structure on $\myR{\pi_i}_*\cO_{Y_i}$ and $t$ is the trace map coming from the perfectness of $\myR{\pi_i}_*\cO_{Y_i}$. Moreover, the composition map is generically the identity and hence the identity map (note that by construction, all the (irreducible components of) $X_i$'s and $Y_i$'s are generically finite over and dominate $X = \Spec R$). As $s$ and $t$ are functorial, these induce maps
        $$\cO_{X_\bullet} \to \myR{\pi_\bullet}_*\cO_{Y_\bullet} \to \cO_{X_\bullet}$$
    such that the composition is the identity.  
    It follows that the canonical map $\myR\Gamma(X_\bullet, \cO_{X_\bullet}) \to \myR\Gamma(Y_\bullet, \cO_{Y_\bullet})$ splits in $D(R)$. As a consequence, the natural map 
        $$\overline{I^{d+k}} \to R\to \myR\Gamma(X_\bullet, \cO_{X_\bullet})  \to L^k(J)\otimes^{\myL} \myR\Gamma(X_\bullet, \cO_{X_\bullet})$$
    is zero in $D(R)$. 
    We thus see that the composition
    \begin{align*}
     \overline{I^{d+k}} \to & R\to \myR\Gamma(X_\bullet, \cO_{X_\bullet})  \to L^k(J)\otimes^{\myL} \myR\Gamma(X_\bullet, \cO_{X_{\bullet}}) \\
     & \to L^k(J)\otimes^{\myL} \myR\Gamma(X_{\leq d+2}, \cO_{X_{\leq d+2}})\to L^k(J)\otimes^{\myL} \myR\Gamma(V_{\leq d+2}, \cO_{V_{\leq d+2}}) 
    \end{align*}
    is zero in $D(R)$. Since $J$ is generated by at most $d+1$ elements, the Buchsbaum-Eisenbud complex $L^k(J)$ has length at most $d+1$. It follows that (since $V_\bullet\to V$ is a Zariski hypercover)
    \[
        H_0\big(L^k(J) \otimes^{\myL} \myR\Gamma(V,\cO_{V})\big)\cong H_0\big(L^k(J) \otimes^{\myL} \myR\Gamma(V_{\bullet},\cO_{V_{\bullet}})\big) \cong H_0\big(L^k(J) \otimes^{\myL} \myR\Gamma(V_{\leq d+2},\cO_{V_{\leq d+2}})\big)
    \]
    where the second isomorphism follows from a simple spectral sequence argument.
    Thus after taking $H_0(-)$, we find that the map 
        $$\overline{I^{d+k}} \to H_0\big(L^k(J) \otimes^{\myL} \myR\Gamma(V_{\leq d+2},\cO_{V_{\leq d+2}})\big)\cong H_0\big(L^k(J) \otimes^{\myL} \myR\Gamma(V,\cO_{V})\big)$$
    is zero. However, since $V$ is proper surjective over $X$, we can pick $c\neq 0$ so that multiplication by $c$ on $R$ factors through $\myR\Gamma(V,\cO_{V})$  (since $R\to \myR\Gamma(V,\cO_{V})$ splits after tensoring with the fraction field of $R$ and $\myR\Gamma(V,\cO_{V})\in D^b_{\text{coh}}(R)$). Note that $c$ depends only on $V$ and does not depend on $I$ or $k$. We thus find that $c\overline{I^{d+k}}\subseteq J^k\subseteq I^k$ for all $I$ and all $k\geq 1$, i.e., $\text{T}_{d}(R)\ni c\neq 0$ as desired.
\end{proof}

\begin{corollary}[Uniform \myBrianconSkoda]
\label{cor.UniformBS}
    Suppose $R$ is a quasi-excellent reduced Noetherian ring of finite dimension.  Then there exists a positive integer $k$ such that for all ideals $I\subseteq R$, $\overline{I^n}\subseteq I^{n-k}$ for all integers $n\geq k$.
\end{corollary}
\begin{proof}
    First of all, we can replace $R$ by $R\otimes_{\Z}\Z(t)$ {(where $\bZ(t)$ denotes $\bZ[t]$ localized at multiplicative set of all elements not contained in any $p \bZ[t]$)} to assume that all residue fields of $R$ are infinite. Secondly, by replacing $R$ by a faithfully flat \'{e}tale cover, we may assume that $R$ is excellent by \cite[Corollary 6.6]{LyuFormalLifting} (Lyu pointed out to us this generalization from excellent to quasi-excellent).

    By \cite[Proposition 3.7]{Huneke.UniformBoundsInNoetherianRings}, it is enough to show that $\text{CM}(R/P)\neq 0$ and that $\text{T}(R/P) := \bigcup_k \mathrm{T}_k(R/P) \neq0$ for all prime ideals $P\in\Spec(R)$. We have just shown in \autoref{thm.UniformBS} that $\text{T}(R/P)\neq0$. We know $\text{CM}(R/P) \neq 0$ by \cite{ZhouUniformAnnihilators}. Alternately, by replacing $R$ by a faithfully flat \'{e}tale cover (which we already may have done when reducing to the excellent case), we may assume that $R$ admits a dualizing complex by \cite[Theorem 6.5]{LyuFormalLifting}, and thus $R/P$ is a homomorphic image of a Gorenstein ring of finite dimension by \cite[Theorem 1.2]{KawasakiMacaulayfication} and so $\text{CM}(R/P)\neq 0$ by \cite[Proposition 4.5 (i)]{Huneke.UniformBoundsInNoetherianRings}.
\end{proof}

\begin{remark}
For any Noetherian local ring $(R,\m)$ of dimension $d$ and any system of parameters $x_1,\dots,x_d$, it is easy to see that 
$$\overline{(x_1^d,\dots,x_d^d)^{d-1}}\nsubseteq (x_1^d,\dots,x_d^d).$$
In particular, one cannot expect the conclusion of \autoref{cor.UniformBS} to hold for infinite dimensional excellent rings (such examples exist, see \cite{TanakaInfiniteDimensionalExcellent}): for if such $k$ exists, then localizing at a prime of height $k+1$ would contradict the non-containment above.
\end{remark}

\begin{corollary}[Uniform Artin-Rees]
\label{cor.UniformAR}
    Suppose $R$ is a quasi-excellent Noetherian ring of finite dimension and $N \subseteq M$ are finitely generated $R$-modules.  Then there exists an integer $\ell$ depending on $N$ and $M$ such that for all ideals $I\subseteq R$ and all $n \geq \ell$, we have that 
    \[
        I^n M \cap N \subseteq I^{n-\ell} M.
    \]
\end{corollary}
\begin{proof}
    By base change, we can again replace $R$ by $R\otimes_{\Z}\Z(t)$ and so assume $R$ has infinite residue fields and, as above, assume that $R$ is excellent by \cite[Corollary 6.6]{LyuFormalLifting}.  For all $P\in\Spec R$, we know that ${\mathrm{CM}}(R/P) \neq 0$ by \cite{ZhouUniformAnnihilators} (or alternately as above).  By \autoref{thm.UniformBS}, we see that $\mathrm{T}(R/P) \neq 0$.  Hence the result follows from \cite[Theorem 3.4]{Huneke.UniformBoundsInNoetherianRings}.
\end{proof}

\bibliographystyle{skalpha}
\bibliography{alterationsrefs}

\end{document}